%
%
%
 
\documentclass[12pt]{amsart}
\usepackage{latexsym,amssymb,amsmath,hyperref,amsthm,amsfonts,
caption,subcaption,tikz,comment}
\usetikzlibrary{
intersections, arrows.meta,
automata,er,calc,
backgrounds,
mindmap,folding,
patterns,
decorations.markings,
fit,decorations,
shapes,matrix,
positioning,
shapes.geometric,
arrows,through, graphs, graphs.standard
}

\textwidth=16.00cm
\textheight=22.00cm
\topmargin=0.00cm
\oddsidemargin=0.00cm
\evensidemargin=0.00cm
\headheight=0cm
\headsep=1cm
\headsep=0.5cm 
\numberwithin{equation}{section}
\hyphenation{semi-stable}
\setlength{\parskip}{3pt}

\usetikzlibrary{positioning}
\usetikzlibrary{arrows}

\newtheorem{theorem}{Theorem}[section]
\newtheorem{lemma}[theorem]{Lemma}

\newtheorem{corollary}[theorem]{Corollary}

\theoremstyle{definition}
 
\newtheorem{remark}[theorem]{Remark}

\newcommand{\K}{\mathbb{K}}
\newcommand{\N}{\mathbb{N}}
\newcommand{\Z}{\mathbb{Z}}
\DeclareMathOperator{\im}{im}

\DeclareMathOperator{\reg}{reg}

\DeclareMathOperator{\pdim}{pdim}
\DeclareMathOperator{\depth}{depth}

\DeclareMathOperator{\mat}{mat}

\begin{document}

 
\title{Comparing invariants of toric ideals of 
bipartite graphs}
\thanks{Version: \today}

\author[K. Bhaskara]{Kieran Bhaskara}
\author[A. Van Tuyl]{Adam Van Tuyl}

\address[K. Bhaskara, A. Van Tuyl]
{Department of Mathematics and Statistics\\
McMaster University, Hamilton, ON, L8S 4L8}
\email{ kieran.bhaskara@mcmaster.ca,
vantuyl@math.mcmaster.ca}

\keywords{Toric ideals, bipartite graphs, $h$-polynomial,
regularity, projective dimension, depth, dimension}
\subjclass[2020]{13D02, 13P10, 13D40, 14M25, 05E40}

 
\begin{abstract}
Let $G$ be a finite simple graph and let $I_G$ denote 
its associated toric ideal in the polynomial ring $R$.  
For each integer $n\geq 2$, we completely determine all
the possible values for the tuple 
$({\rm reg}(R/I_G), \deg(h_{R/I_G}(t)),
{\rm pdim}(R/I_G), {\rm depth}(R/I_G),\dim(R/I_G))$ when
$G$ is a connected bipartite graph on $n$ vertices.
\end{abstract}
\maketitle
\section{Introduction}
Let $I$ be a homogeneous ideal of a polynomial 
ring $S = \mathbb{K}[x_1,\ldots,x_m]$ over an 
algebraically closed field $\mathbb{K}$ of 
characteristic zero.  Associated
with $I$ are a number of homological invariants that 
are encoded in the minimal graded free resolution of
$S/I$;  some of these invariants are
the (Castelnuovo--Mumford) regularity $\reg(S/I)$,
the projective dimension $\pdim(S/I)$,
the (Krull) dimension $\dim(S/I)$, the 
depth $\depth(S/I)$
and $\deg(h_{S/I}(t))$,
the degree of the $h$-polynomial of $S/I$. Four of these
invariants are related by the inequality
\[\reg(S/I) - \deg(h_{S/I}(t)) \leq \dim(S/I) - 
\depth(S/I),\]
(see, for example \cite[Corollary B.28]{Vas}), 
while the depth and projective dimension are related via 
the well-known Auslander--Buchsbaum formula.

A recent program in combinatorial commutative algebra 
is to understand what possible pairs (or tuples) of 
these invariants can be realized for specific families 
of ideals, most notably,
ideals that are defined combinatorially (e.g.,
edge ideals, binomial edge ideals, and toric
ideals of graphs). This circle of problems
was introduced by Hibi, Higashitani, Kimura,
and O'Keefe \cite{HHKO}, who
compared the depth and dimension of 
toric ideals of graphs, and 
Hibi and Matsuda \cite{HM1,HM2}, who first showed that for any pair $r,d$ of positive integers, there exists a (lexsegment) monomial ideal $I$
with $(r,d)=(\reg(S/I),\deg (h_{S/I}(t)))$. 
Hibi, Matsuda, and Van Tuyl \cite{HMVT} later showed
a similar result for the edge ideals of graphs.  An investigation of the possible pairs $(\reg(S/I),\deg(h_{S/I}(t)))$ for other families of ideals 
soon followed, most notably, for binomial edge
ideals \cite{HM3,KK} and toric ideals of graphs \cite{FKVT}. 
Other recent work has focused on determining the
possible pairs $(\reg(S/I),\pdim(S/I))$ \cite{HaH} and $(\depth(S/I),\dim(S/I))$ \cite{HKKMV,HKO} when $I$ is an edge ideal.

Hibi, Kimura, Matsuda, and Van Tuyl 
\cite{HKMVT} introduced a variation 
of this problem by asking if
we can completely describe all pairs 
$({\rm reg}(S/I),{\rm deg}(h_{S/I}(t)))$ if we restrict
to edge ideals of connected graphs on $n$ vertices; in the  case of  Cameron--Walker graphs, they were able to answer this
question. In a related direction, Erey and Hibi \cite{EH} completely described all the possible pairs $({\rm reg}(S/I),{\rm pdim}(S/I))$  as $I$ varies over all the edge ideals of connected bipartite 
graphs on $n$ vertices.  Recently, Ficarra and Sgroi \cite{FS} gave an almost complete description
of 
$({\rm reg}(S/I),{\rm pdim}(S/I))$ 
for the case of binomial
edge ideals of graphs on $n$ non-isolated vertices.
Inspired by  \cite{EH,FS},
in this paper we characterize the values of the invariants that can occur if $I$ is the
toric ideal of a
connected bipartite graph 
on $n$ vertices.  

We introduce some 
notation to describe our results.  Let $G = (V,E)$ be a 
finite simple graph with edge set
$E = \{e_1,\ldots,e_q\}$ and vertex set 
$V =\{v_1,\ldots,v_n\}$. Define a 
$\mathbb{K}$-algebra homomorphism
$\varphi:R = \mathbb{K}[e_1,\ldots,e_q]
\rightarrow \mathbb{K}[v_1,\ldots,v_n]$
by $\varphi(e_i) = v_jv_k$ if $e_i = \{v_j,v_k\} \in E$.  The {\it toric ideal
of $G$}, denoted $I_G$, is the kernel
of $\varphi$.
The graph $G$ is a {\it bipartite
graph} if there exists $V_1,V_2 \subseteq V$ such that
$V = V_1 \cup V_2$ and $V_1 \cap V_2 =\emptyset$ with the property that every $e\in E$ has an endpoint
in both $V_1$ and $V_2$. 
We  define ${\rm CBPT}_{{\rm reg},\deg, {\rm pdim},{\rm depth},\dim}(n)$
to be the set

\noindent
\footnotesize
\[\left\{({\rm reg}(R/I_G),\deg (h_{R/I_G}(t)),
{\rm pdim}(R/I_G), {\rm depth}(R/I_G), \dim(R/I_G)) ~\left|~
\begin{array}{c}
\mbox{$G$ is a connected bipartite} \\
\mbox{graph on $n$ vertices}
\end{array}
\right\}\right..\]
\normalsize

Our main result is a complete characterization
of the elements in the above set:
\begin{theorem}\label{maintheorem}
Let $n \geq 2$.  Then ${\rm CBPT}_{{\rm reg},\deg, {\rm pdim},{\rm depth},\dim}(n)$ is given by
$$\left\{(r,r,p,n-1,n-1) ~\left|~ 
0 < r < \left\lfloor \frac{n}{2} \right\rfloor,~
1 \leq p \leq r(n-2-r) \right\}\right. \cup \{(0,0,0,n-1,n-1)\}.$$
\end{theorem}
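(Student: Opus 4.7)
The plan is to first reduce the five-tuple characterization to determining the possible pairs $(r,p) := (\reg(R/I_G),\pdim(R/I_G))$, then establish necessary bounds on this pair, and finally realize every admissible pair by an explicit graph. For the reduction, I would use that for bipartite $G$ the toric ring $R/I_G$ is a normal semigroup ring (the odd-cycle condition of Simis--Vasconcelos--Villarreal holds vacuously for bipartite graphs), and hence is Cohen--Macaulay by Hochster's theorem. Combined with the standard identity $\dim(R/I_G) = n-1$ for connected bipartite $G$, this yields $\depth(R/I_G) = \dim(R/I_G) = n-1$. Cohen--Macaulayness also forces $\reg(R/I_G) = \deg h_{R/I_G}(t)$ (via reduction to an Artinian quotient by a linear system of parameters), explaining why the first two coordinates must coincide, and the Auslander--Buchsbaum formula gives $\pdim(R/I_G) = q - (n-1)$ with $q = |E(G)|$.

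For the necessity direction, two bounds must be established: $r < \lfloor n/2 \rfloor$ and $p \leq r(n-2-r)$. The first follows from the bound $\reg(R/I_G) \leq \min(a,b) - 1$ for bipartite $G$ with bipartition of sizes $(a,b)$, a standard estimate arising from the Segre-diagonal structure $R/I_G \subseteq \K[V_1] \otimes \K[V_2]$. The second, equivalent to $q \leq (r+1)(n-r-1)$, is more subtle. The naive estimate $q \leq ab$ combined with $r+1 \leq \min(a,b)$ only yields $q \leq ab$, and the gap $ab - (r+1)(n-r-1) = (a-r-1)(b-r-1)$ is strictly positive whenever $r+1 < \min(a,b)$. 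Closing this gap requires a structural argument showing that near-maximal edge count together with $r < \min(a,b)-1$ is impossible; intuitively, too many edges must force a primitive even closed walk of length at least $2r+4$, contradicting $\reg(R/I_G) = r$.

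For sufficiency, explicit graphs realize each admissible tuple. The trivial tuple $(0,0,0,n-1,n-1)$ is realized by any tree on $n$ vertices, for which $I_G = 0$. The maximum value $p = r(n-2-r)$ at regularity $r$ is realized by the complete bipartite graph $K_{r+1,\,n-r-1}$, whose toric ideal is the ideal of $2\times 2$ minors of a generic $(r+1)\times(n-r-1)$ matrix, with $\reg = r$ and edge count $(r+1)(n-r-1) = (n-1) + r(n-2-r)$. The minimum value $p = 1$ at regularity $r$ is realized by the cycle $C_{2r+2}$ together with a pendant path of length $n-2r-2$ attached at one vertex, whose toric ideal is principal in degree $r+1$. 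For intermediate values of $p$, I would construct an interpolating family, for instance by iteratively deleting carefully chosen edges from $K_{r+1,\,n-r-1}$ (or dually, adding chords to the cycle-with-tail) while verifying that the regularity remains equal to $r$ at every step.

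The main obstacle is expected to be controlling the regularity under single-edge modifications: both the projective-dimension upper bound in the necessity step and the verification of the interpolating family in the sufficiency step ultimately rest on understanding how the primitive binomial generators of $I_G$ \emph{-- which correspond to primitive even closed walks of $G$ --} change as individual edges are deleted or added. This combinatorial analysis of primitive walks under edge modifications is where the technical core of the proof will lie.
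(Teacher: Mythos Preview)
Your reduction to the pair $(r,p)$ and the overall architecture match the paper exactly, as do the extremal constructions (tree, $K_{r+1,n-r-1}$, and $C_{2r+2}$ with a pendant path). The gap is precisely where you locate it: you identify that both the necessity bound $q \leq (r+1)(n-r-1)$ and the regularity control along an interpolating family hinge on understanding how primitive even closed walks behave under edge modifications, but you do not supply that argument, and a direct primitive-walk analysis is likely to be delicate.

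The paper circumvents this analysis entirely with two external tools. First, the regularity-monotonicity theorem of Almousa--Dochtermann--Smith, which says that for connected bipartite $G' \subseteq G$ one has $\reg(\K[G']) \leq \reg(\K[G])$. This single statement handles both directions: for sufficiency, any connected bipartite $G$ with $C_{2r+2} \subseteq G \subseteq K_{r+1,n-r-1}$ is automatically sandwiched between regularity $r$ and $r$, so the interpolating family needs no edge-by-edge tracking at all (the paper uses two simple constructions covering $1 \leq p \leq r^2$ and $r^2 \leq p \leq r(n-2-r)$). Second, for the necessity bound, the paper invokes a 1981 extremal result of Jackson: a bipartite graph with more than $(r+1)(n-r-1)$ edges must contain a cycle of length at least $2r+4$. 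Combined with monotonicity and $\reg(\K[C_{2r+4}]) = r+1$, this immediately contradicts $\reg(\K[G]) = r$. So the ``technical core'' you anticipate dissolves once these two ingredients are in hand; without them, your proposal is a correct outline but not yet a proof.
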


\noindent
Notice that Theorem \ref{maintheorem} 
describes all five invariants;  the
only other result similar to Theorem \ref{maintheorem} is a result
of Hibi, Kanno, Kimura, Matsuda, and Van Tuyl \cite{HKKMV}
which described four of these invariants
for the edge ideals of Cameron--Walker
graphs.

The proof of Theorem \ref{maintheorem} 
requires both new and old results.
Recent work of 
Almousa, Dochtermann, and Smith \cite{ADS}
allow us to  bound the
regularity of toric
ideals of bipartite graphs
by looking at subgraphs.  We also require
an old graph theory result of Jackson \cite{J}
on the existence of cycles in bipartite graphs, which
enables us to find an upper bound 
on the number of edges of $G$ in terms of the regularity
of $R/I_G$ (see
Lemma \ref{edge lemma}).
Note that it is well-known that
the toric ideals of bipartite graphs are
Cohen--Macaualy; consequently,
 $\dim(R/I_G) =
{\rm depth}(R/I_G) = n-1$ (cf. {\cite[Corollary 10.1.21]{Villarreal-book}}) and 
$\deg(h_{R/I_G}(t))
= {\rm reg}(R/I_G)$ (cf. \cite[Corollary B.28]{Vas}).
Consequently, proving Theorem \ref{maintheorem}
is equivalent to proving 
what values the regularity and projective
dimension can obtain;
these details are given in Theorem \ref{reg vs pdim}.

Our paper is structured as follows.
In Section 2 we recall the relevant 
background on graph theory, commutative
algebra, and toric ideals of graphs.  In
Section 3, we focus on the regularity 
and projective dimension of
toric ideals of bipartite graphs.  
In Section 4 we
combine the previous results of the paper
to prove Theorem \ref{maintheorem}.

\noindent
{\bf Acknowledgements.} 
Some of our results first appeared in the 
first author’s MSc thesis \cite{B}.
The authors thank Anton Dochtermann, 
Antonino Ficarra, Huy T\`ai H\`a, 
Takayuki Hibi, Akihiro Higashitani, Irena Peeva, Ben Smith, and Rafael Villarreal 
for answering our questions and feedback.
Calculations using {\em Macaulay2} \cite{M2} 
inspired some of these results. Van Tuyl 
acknowledges the support of 
NSERC RGPIN-2019-05412.

\section{Background}
Throughout this paper $\mathbb{K}$ will denote an algebraically
closed field of characteristic zero.  In this section we collect
the facts needed to prove our main results.

\subsection{Graph theory} We recall
the relevant graph theory terminology.
A {\it finite simple graph} (or a {\it graph}) $G=(V(G),E(G))$ consists 
of a non-empty finite set 
$V(G) = \{v_1,\ldots,v_n\}$, called the
vertices, and a finite set 
$E(G) = \{e_1,\ldots,e_q\} \subseteq \{\{u,v\}\mid 
u,v \in V(G), u\neq v\}$ of distinct unordered pairs 
of distinct elements of $V(G)$, called 
the edges.  We sometimes write $V$ (resp. $E$) for $V(G)$ (resp. $E(G)$) if $G$ is clear from the context.
A graph $H = (V(H),E(H))$ is said to be a {\it subgraph} of a graph $G$ if $V(H) \subseteq V(G)$ and  $E(H) \subseteq E(G)$. In this case, we say that $G$ contains $H$ and write $H \subseteq G$.

A {\it walk} of $G$ is a sequence of edges $w = (e_1, e_2, \dots, e_m)$, where each $e_i=\{u_{i_1},u_{i_2}\}\in E$ and $u_{i_2}=u_{{(i+1)}_1}$ for each $i = 1, \dots, m - 1$. 
Equivalently, a walk is a sequence of vertices $(u_1, \dots,u_m,u_{m+1})$ such that 
$\{u_i,u_{i+1}\} \in E$ for all 
$i = 1,\dots,m$. Here, $m$ is referred to as the 
{\it length} of the walk. A walk is 
{\it even} if $m$ is even, and it is {\it closed} if $u_{m+1}=u_1$. 
    Two vertices $u$ and $v$ are said to be {\it connected} if there is a walk between them. A graph $G$ is said to be {\it connected} if every two distinct vertices of $G$ are connected.  A {\it connected
    component} of $G$ is a maximal
    connected subgraph of $G$.

A {\it cycle} of a graph $G$ is a closed walk $(u_1, \dots,u_m,u_{m+1}=u_1)$ of vertices of $G$ (with $m \geq 3$) such that the only vertices in the walk that are not pairwise distinct are $u_1$ and $u_{m+1}$. A cycle of length $m$ is called an 
{\it $m$-cycle}.   The {\it $m$-cycle graph},
denoted $C_m$, is the graph with the
vertex set $V(C_m) = \{v_1,\ldots,v_m\}$ and 
edge set $E(C_m) = \{\{v_1,v_2\},\{v_2,v_3\},
\ldots,\{v_{m-1},v_m\},\{v_m,v_1\}\}$.
A {\it tree} is a connected graph that contains no cycles; a 
{\it forest}
is graph where each connected component is
a tree.

We are primarily interested in bipartite graphs.
A graph $G=(V,E)$ is  {\it bipartite} if there exists a partition (or bipartition)
$V=V_1 \cup V_2$ with $V_1,V_2 \subseteq V$
and $V_1 \cap V_2 = \emptyset$ such that every edge of $E$ joins a vertex in $V_1$ and a vertex in $V_2$. Given $a,b \geq 1$, 
the {\it complete bipartite graph},
$K_{a,b}$ is the graph
with partition 
$V = \{x_1,\ldots,x_a\} \cup
\{y_1,\ldots,y_b\}$ and edge set
$\{\{x_i,y_j\} ~|~ 1 \leq i \leq a,\, 1 \leq j \leq b\}$.  Note that $C_m$ is a bipartite graph
if and only if $m$ is even.   All trees and forests
are also bipartite graphs.

A {\it matching} of a graph $G$ is a collection of pairwise non-adjacent edges of $G$. The {\it matching
number} of $G$, denoted $\mat(G)$, is the largest size of any matching of $G$.  Note that if $G$ 
is a bipartite graph, then it is straightforward to
verify that
\begin{equation}\label{matchbound}
\mat(G) \leq \left\lfloor \frac{|V(G)|}{2} \right\rfloor.
\end{equation}

The following result, which
determines the existence of cycles in a bipartite graph, will
play a pivotal role in later results.

\begin{theorem}[{\cite[Theorem 3]{J}}] \label{Jackson lemma}
    Let $m \geq 2$ be an integer, and 
    let $G$ be a bipartite graph with bipartition $V = A \cup B$, where $|A|=a$, $|B|=b$, and $2 \leq m \leq a \leq b$. If
    \[|E(G)|>\begin{cases}
  a+(b-1)(m-1), & a \leq 2m-2, \\
  (a+b -2m+3)(m-1), & a \geq 2m-2,
\end{cases}
\]
then $G$ contains a cycle of length at least $2m$.
\end{theorem}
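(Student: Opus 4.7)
The result is a Tur\'an-type bound: a bipartite graph avoiding cycles of length at least $2m$ cannot have too many edges. My plan is to argue the contrapositive, assuming $G$ has no cycle of length at least $2m$ and bounding $|E(G)|$ from above by the right-hand side of the displayed inequality. Since $G$ is bipartite, every cycle has even length, so the hypothesis is equivalent to saying that the circumference of $G$ is at most $2m-2$.

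The natural workhorse would be a longest-path/longest-cycle analysis in the spirit of Erd\H{o}s--Gallai. I would start with a longest path $P = u_0 u_1 \cdots u_L$ of $G$; by maximality, every neighbor of $u_0$ and of $u_L$ lies on $P$. A rotation argument (replacing $P$ by $u_0 \cdots u_i u_L u_{L-1} \cdots u_{i+1}$ whenever $u_L u_i \in E(G)$) produces many longest paths with the same vertex set, together with strong degree-sum constraints on their endpoints. The circumference bound then restricts how the neighborhoods of these endpoints can be distributed on $P$: any ``long'' chord would produce a cycle of length at least $2m$.

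I would then split into cases as in the statement. For $a \leq 2m-2$, I would induct on $b$ by deleting a vertex of $B$ of low degree: a single vertex of $B$ can have at most $a$ neighbors (the whole small side), while each of the remaining $b-1$ vertices of $B$ can contribute at most $m-1$ edges before forcing a long cycle, yielding the bound $a + (b-1)(m-1)$. For $a \geq 2m-2$, I would fix a longest cycle $C$ (of length $2\ell \leq 2m-2$) and examine how the vertices of $V(G) \setminus V(C)$ attach to $C$ and to one another; bounding these attachments by $m-1$ per external vertex and accounting separately for the edges internal to $C$ should yield the factored bound $(m-1)(a+b-2m+3)$.

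The main obstacle will be the detailed bookkeeping that pins down tightness: the extremal graphs attaining equality in the two regimes are different combinatorial structures, and one must rigorously track how rotations, path-splicings, and attachments to $C$ interact with the bipartite structure to force a cycle of length at least $2m$ from any excess edge. As a sanity check, at the boundary $a = 2m-2$ both bounds evaluate to $(b+1)(m-1)$, so the two branches of the argument must agree there.
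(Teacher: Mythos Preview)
The paper does not contain a proof of this statement at all: Theorem~\ref{Jackson lemma} is quoted verbatim from Jackson's paper \cite[Theorem~3]{J} and used as a black box in the proof of Lemma~\ref{edge lemma}. So there is nothing in the present paper to compare your proposal against.

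For what it is worth, your outline is pointed in a reasonable direction (longest-path rotations in the Erd\H{o}s--Gallai tradition are indeed the engine behind Jackson's original argument), but what you have written is only a heuristic sketch, not a proof. In particular, the claim in the $a\le 2m-2$ case that ``each of the remaining $b-1$ vertices of $B$ can contribute at most $m-1$ edges before forcing a long cycle'' is not justified by anything you have said: a single $B$-vertex of degree $\ge m$ does not by itself create a cycle of length $\ge 2m$, so the induction step needs a genuine structural argument, not a per-vertex degree cap. Likewise, in the $a\ge 2m-2$ case the assertion that external vertices attach to a longest cycle with at most $m-1$ edges each, and that this bookkeeping sums to $(m-1)(a+b-2m+3)$, is asserted rather than shown. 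These are exactly the places where Jackson's proof does real work; your proposal identifies the right shape but does not supply the key lemmas.
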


\subsection{Commutative algebra}
Our goal is to compare a number of homological
invariants.  We recall their definitions and
some properties.
Let $S = \mathbb{K}[x_1,\dots,x_m]$ and let $I$
be a homogeneous ideal of $S$.
The {\it minimal graded free resolution
of $S/I$} has the form:
\begin{equation*}
\resizebox{1\hsize}{!}{$
0 \rightarrow \bigoplus_{j} S(-j)^{\beta_{p,j}(S/I)} 
\rightarrow   \bigoplus_{j} S(-j)^{\beta_{p-1,j}(S/I)} \rightarrow \cdots 
\rightarrow   \bigoplus_{j} S(-j)^{\beta_{1,j}(S/I)} \rightarrow S \rightarrow S/I \rightarrow 0$}
\end{equation*}
where $\beta_{i,j}(S/I)$ denotes the 
{\it $(i,j)$-th graded Betti number} of $S/I$.

Two of the invariants that we will consider are encoded
into this resolution.
The {\it \textup(Castelnuovo--Mumford\textup) regularity} of $S/I$ is defined to be $$\reg(S/I)= \max \{j-i \mid \beta_{i,j}(S/I) \neq 0 \}.$$
The {\it projective dimension} of $S/I$ is the length of the minimal graded free resolution, i.e., \[\pdim(S/I) = \max \{i \mid \beta_{i,j}(S/I) \neq 0  \text{~for some $j$} \}. \]

The {\it Hilbert series} of $S/I$ is the formal power series
\[
\textup{HS}_{S/I}(t) = \sum_{i\geq0} [\dim_\mathbb{K}(S/I)_i]t^i
\]
where $\dim_\mathbb{K}(S/I)_i$ is the dimension of the $i$-th graded piece of $S/I$.

The {\it depth} of $S/I$,
denoted ${\rm depth}(S/I)$, is the length of
any maximal regular sequence of $S/I$ that is contained in the maximal ideal $\mathfrak{m}=\langle x_1,\dots,x_m\rangle \subset S$.
The  {\it \textup(Krull\textup) dimension}  of 
 $S/I$,
denoted $\dim(S/I)$, is the supremum of the lengths of all chains of prime
ideals in $S/I$.  It is well  known that ${\rm depth}(S/I) \leq \dim(S/I)$ for
all homogeneous ideals $I$ of $S$.  We say
$S/I$ is {\it Cohen--Macaulay} if 
${\rm depth}(S/I) = \dim(S/I)$.

By the Hilbert--Serre
Theorem \cite[Corollary 4.1.8]{BH}, there exists a unique polynomial $h_{S/I}(t) \in \Z[t]$, called the {\it $h$-polynomial} of $S/I$, such that $\textup{HS}_{S/I}(t)$ can be written as \[
    \textup{HS}_{S/I}(t)= \frac{h_{S/I}(t)}{(1-t)^{\dim(S/I)}}
    \] with $h_{S/I}(1) \neq 0$. We denote the {\it degree of the $h$-polynomial} $h_{S/I}(t)$  by $\deg (h_{S/I}(t))$.

We want to
compare the invariants
${\rm reg}(S/I)$, ${\rm pdim}(S/I)$,
${\rm depth}(S/I)$, $\dim(S/I)$,
and $\deg(h_{S/I}(t))$.  The following result provides some useful
relations among these invariants.

\begin{theorem}\label{Auslander}
Let $I$ be a proper homogeneous
ideal of $S = \mathbb{K}[x_1,\ldots,x_m]$. Then
\begin{enumerate}
    \item[$(i)$]  $\pdim(S/I) + \depth(S/I) = m$;
    \item[$(ii)$] if $S/I$ is Cohen-Macaulay, then  ${\rm reg}(S/I) =
    \deg(h_{S/I}(t))$.
    \end{enumerate}
\end{theorem}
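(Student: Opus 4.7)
The plan is to treat parts (i) and (ii) separately, each as a direct invocation of a classical theorem; the work is limited to checking that the hypotheses of those theorems are met in the stated graded setting.

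For part (i), I would apply the graded Auslander--Buchsbaum formula. The ambient polynomial ring $S = \mathbb{K}[x_1,\ldots,x_m]$ is Cohen--Macaulay of dimension $m$, so $\depth(S) = m$. Hilbert's syzygy theorem guarantees that $\pdim_S(S/I) \leq m < \infty$, which is the finiteness hypothesis needed for Auslander--Buchsbaum. The formula then reads $\pdim(S/I) + \depth(S/I) = \depth(S) = m$, as claimed.

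For part (ii), I would reduce to the Artinian case. Since $\mathbb{K}$ is infinite (being algebraically closed), $S/I$ admits a linear system of parameters $\theta_1, \ldots, \theta_d$ with $d = \dim(S/I)$, and the Cohen--Macaulay hypothesis forces $\theta_1, \ldots, \theta_d$ to be a regular sequence on $S/I$. Setting $A := S/(I, \theta_1, \ldots, \theta_d)$, the Hilbert series of $A$ is obtained from that of $S/I$ by multiplying by $(1-t)^d$, so $\mathrm{HS}_A(t) = h_{S/I}(t)$. Consequently $\deg(h_{S/I}(t))$ equals the top nonzero degree of $A$. Separately, quotienting by a linear regular sequence leaves regularity unchanged, so $\reg(S/I) = \reg(A)$; and for an Artinian graded algebra, $\reg(A)$ is exactly $\max\{j \mid A_j \neq 0\}$. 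Combining the two observations gives $\reg(S/I) = \deg(h_{S/I}(t))$.

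The main obstacle, such as it is, is conceptual rather than technical: one must know (or verify) two standard facts, namely that regularity is preserved under quotienting by a regular sequence of linear forms, and that for a zero-dimensional graded algebra $A$ one has $\reg(A) = \max\{j \mid A_j \neq 0\}$. Both are well-documented in \cite{BH} and \cite{Vas}, and in particular the statement of part (ii) is already cited in the introduction as \cite[Corollary B.28]{Vas}. Thus in a published version one could simply cite these references rather than reproduce the arguments above; the sketch I have given shows how one would unwind the citations if needed.
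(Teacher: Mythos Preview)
Your proposal is correct. The paper's own proof is simply by citation---part $(i)$ to the Auslander--Buchsbaum formula \cite[Theorem 1.3.3]{BH} and part $(ii)$ to \cite[Corollary B.28]{Vas}---which is precisely the fallback you describe in your final paragraph; your sketch merely unwinds those citations, so the approaches coincide.
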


\begin{proof}
Statement $(i)$ is a special
case of the Auslander--Buchsbaum
formula {\cite[Theorem 1.3.3]{BH}}.
Statement $(ii)$ is \cite[Corollary B.28]{Vas}
(or Corollary B.4.1 in earlier
printings).
\end{proof}

\subsection{Toric ideals of graphs}
We define the family of ideals that are studied
in this paper and some of their
properties. Let $G$ be a graph with vertex set $V=\{v_1,\dots,v_n\}$ and edge set $E=\{e_1,\dots,e_q\}$ with $q \geq 1$. 
Let $\K[V]=\K[v_1,\dots,v_n]$ and $\K[E]=\K[e_1,\dots,e_q]$ be polynomial rings in the vertex and edge variables, respectively. Define a
$\K$-algebra homomorphism $\varphi\colon \K[E] \to \K[V]$ by 
$\varphi(e_i) = v_{i_1} v_{i_2}$ for all $e_i = \{v_{i_1}, v_{i_2}\} \in E$, $1 \leq i \leq q$.
The {\it toric ideal of $G$}, denoted $I_G$, is defined to be the kernel of the homomorphism $\varphi$.

\begin{remark} 
    The ideal $I_G$ is called a toric ideal
    since $I_G$ is a prime binomial ideal.  Indeed,
    the image of $\varphi$ is an 
    integral domain, and since $\varphi(\K[E])$ is 
    isomorphic to $\K[E]/I_G$ by the first 
    isomorphism theorem, it follows that $I_G$ is a 
    prime ideal.   Theorem 
    \ref{toricgenerators}, which is given below, 
    shows that $I_G$ is  a binomial ideal.  Note that in  the definition of $I_G$, we avoid 
    the case that $G$ has no edges to ensure
    that $\K[E]$ has at least one variable.
    \end{remark}

    \begin{remark}
            We write $\K[G]$ to denote the quotient ring $\K[E]/I_G$. Note that in the literature (see e.g., \cite{ADS, HH,HHKO}), $\K[G]$ often denotes the \textit{edge ring of $G$} (i.e., the image $\im(\varphi)$ of $\varphi$). As mentioned in the previous remark, $\im(\varphi)$ and $\K[E]/I_G$ are isomorphic as rings; however, we must take care when stating results about gradings on these rings, as they may differ. In all subsequent appearances of the notation $\K[G]$, we have ensured that results from the literature concerning $\K[G]$ remain true under our interpretation.
    \end{remark}   

While the generators of $I_G$ are defined
implicitly, there is a well-known 
connection between the closed even walks of a graph $G$ and a (possibly non-minimal) set of  generators for $I_G$.  For 
a closed even walk $\Gamma=(e_{i_1},\dots,e_{i_{2m}})$ of graph $G$, we define a binomial 
\[f_\Gamma = e_{i_1}e_{i_3}\cdots e_{i_{2m-1}}
- e_{i_2}e_{i_4}\cdots e_{i_{2m}}.\]
We can now describe a set of generators of $I_G$.
    \begin{theorem}[{\cite[Proposition 3.1]{Villarreal-article}}]
\label{toricgenerators}
If $I_G$ is the toric ideal of a graph $G$, then
\[I_G = \langle f_\Gamma ~|~ \mbox{$\Gamma$ is
a closed even walk of $G$}\rangle.\]
If $G$ is bipartite, then $I_G =
 \langle f_\Gamma ~|~ \mbox{$\Gamma$ is
a even cycle of $G$}\rangle.$
\end{theorem}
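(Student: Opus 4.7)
The plan is first to prove the containment $\langle f_\Gamma \mid \Gamma \text{ is a closed even walk}\rangle \subseteq I_G$, then the reverse containment, and finally the bipartite refinement. The easy direction is a direct computation: for a closed even walk $\Gamma = (e_{i_1}, \ldots, e_{i_{2m}})$, write $e_{i_k} = \{u_k, u_{k+1}\}$ with indices taken mod $2m$. Then
\[
\varphi(e_{i_1} e_{i_3} \cdots e_{i_{2m-1}}) = \prod_{k=1}^{m} v_{u_{2k-1}} v_{u_{2k}} = v_{u_1} v_{u_2} \cdots v_{u_{2m}},
\]
and identically
\[
\varphi(e_{i_2} e_{i_4} \cdots e_{i_{2m}}) = \prod_{k=1}^{m} v_{u_{2k}} v_{u_{2k+1}} = v_{u_1} v_{u_2} \cdots v_{u_{2m}},
\]
so $f_\Gamma \in \ker \varphi = I_G$.

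For the reverse containment, I would first invoke the standard fact, valid for any monomial map, that $I_G$ is generated by binomials $e^\alpha - e^\beta$ with $\varphi(e^\alpha) = \varphi(e^\beta)$; this is proved by grouping the monomials of an arbitrary element of $I_G$ according to their $\varphi$-image. Given such a binomial, I would cancel common edge factors to reduce to the case where the multisets of edges underlying $\alpha$ and $\beta$ are disjoint. The equality $\varphi(e^\alpha) = \varphi(e^\beta)$ then translates to a balance condition: at each vertex $v$, the total number of edges incident to $v$ in $\alpha$ (with multiplicity) equals that in $\beta$. This balance lets one construct a closed alternating walk $\Gamma$ by starting from an arbitrary edge used in $\alpha$, next picking an edge used in $\beta$ incident to the current endpoint, then an edge of $\alpha$, and so on until the walk first closes. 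If $p, q$ are the products of the edges used in the $\alpha$- and $\beta$-positions of $\Gamma$ (so $f_\Gamma = p - q$), and $c, d$ are the products of the remaining edges of $\alpha, \beta$, the telescoping identity
\[
e^\alpha - e^\beta = pc - qd = c(p-q) + q(c-d) = c \cdot f_\Gamma + q(c-d)
\]
leaves a strictly smaller binomial $c - d \in I_G$, and induction on total degree closes the argument.

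For the bipartite refinement, I would use that in a bipartite graph every closed walk automatically has even length, so only closed even walks arise. It then suffices to express each $f_\Gamma$ in terms of $\{f_C \mid C \text{ an even cycle}\}$. If $\Gamma$ is not already a cycle, some vertex appears at two non-adjacent positions of $\Gamma$, which allows $\Gamma$ to be split there into two shorter closed walks $\Gamma_1$ and $\Gamma_2$; both have even length since $G$ is bipartite. Writing $f_{\Gamma_1} = A - B$ and $f_{\Gamma_2} = C - D$ with the obvious factorizations, one has the identity
\[
f_\Gamma = AC - BD = C(A - B) + B(C - D) = C \cdot f_{\Gamma_1} + B \cdot f_{\Gamma_2},
\]
and induction on the length of $\Gamma$ reduces to the case of an even cycle.

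The main obstacle I anticipate is making the alternating-walk construction in the non-bipartite direction fully rigorous: one must verify that the local vertex-balance always supplies a next edge of the required color, that the walk necessarily closes after using at most finitely many edges, and that the bookkeeping of remaining multiplicities is consistent so that the degree really drops and the induction goes through.
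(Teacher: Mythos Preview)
The paper does not supply its own proof of this statement: Theorem~\ref{toricgenerators} is quoted from \cite[Proposition 3.1]{Villarreal-article} and stated without argument, so there is nothing in the paper to compare against.

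That said, your outline is essentially the classical proof (as in Villarreal's work), and the strategy is sound. A couple of remarks on the points you flagged. For the alternating-walk construction, the cleanest way to guarantee closure with even length is to observe that $|\alpha|=|\beta|$ (since $\varphi$ is degree-preserving), so the red/blue multigraph has equal red and blue degree at every vertex; the balance argument you sketch then shows the walk can always be extended unless you are back at the start having just used a blue edge, and finiteness forces this to occur. For the bipartite splitting, note that the identity $f_\Gamma = C\,f_{\Gamma_1} + B\,f_{\Gamma_2}$ may acquire a sign depending on the parity of the splitting position, but this is harmless for ideal membership; and the degenerate case where one piece has length~$2$ (an edge traversed back and forth) simply yields $f_{\Gamma_i}=0$, so the induction still goes through.
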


We now collect together some facts about
the toric ideals of bipartite graphs.  We need
a result due to Almousa, Dochtermann, and Smith \cite{ADS} that  allows us to find  bounds on the regularity using subgraphs (while \cite{ADS} includes the connected hypothesis, it
can be shown that this hypothesis is not required; for
our purposes, we only require the original statement).

\begin{theorem}[{\cite[Theorem 6.11]{ADS}}] \label{reg of subgraph} Suppose $G \subseteq K_{a,b}$ is a connected bipartite graph and let $G' \subseteq G$ be a connected subgraph with at least two vertices. Then $\reg(\K[G']) \leq \reg(\K[G])$.
\end{theorem}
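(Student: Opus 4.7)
The plan is to reduce the statement to a single edge deletion and then to handle that reduction by a degeneration argument. I would induct on $|E(G)|-|E(G')|$. If this difference is positive, I would first look for an edge $e\in E(G)\setminus E(G')$ that is not a bridge of $G$; then $G-e$ is still connected, contains $G'$, and has fewer edges than $G$, so the inductive hypothesis yields $\reg(\K[G'])\le \reg(\K[G-e])$ and the problem reduces to the single-edge comparison $\reg(\K[G-e])\le \reg(\K[G])$. If instead every edge of $E(G)\setminus E(G')$ is a bridge of $G$, then for any such $e$ the graph $G-e$ splits as a disjoint union $G_1\sqcup G_2$ with $G'\subseteq G_1$; since $\K[G_1\sqcup G_2]\cong \K[G_1]\otimes_\K\K[G_2]$ implies $\reg(\K[G-e])=\reg(\K[G_1])+\reg(\K[G_2])\ge\reg(\K[G_1])$, the inductive hypothesis applied to the pair $(G_1,G')$ gives $\reg(\K[G'])\le\reg(\K[G_1])\le\reg(\K[G-e])$.

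This reduces everything to the single-edge statement: for any bipartite graph $G$ and any $e\in E(G)$, $\reg(\K[G-e])\le \reg(\K[G])$. By Theorem \ref{toricgenerators}, $I_G$ is generated by the binomials $f_\Gamma$ attached to even cycles $\Gamma$, and the generators of $I_{G-e}$ are precisely those $f_\Gamma$ with $e\notin\Gamma$. The approach I would take is to construct a flat degeneration of $\K[G]$ whose initial ring has regularity at most $\reg(\K[G])$ and admits $\K[G-e]$ as a controlled quotient; concretely, one picks a term order placing $e$ last and studies a Gr\"obner basis of $I_G$ formed from the $f_\Gamma$ in order to separate the cycles through $e$ from those avoiding it. Upper semi-continuity of regularity in flat families, combined with a direct comparison of graded Betti numbers, would then deliver $\reg(\K[G-e])\le \reg(\K[G])$.

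The main obstacle lies precisely in this single-edge step, because the naive attempt of setting $\bar e\in\K[G]$ to zero fails. Passing to $\K[G]/(\bar e)$ introduces monomial relations coming from the non-$e$ halves of the binomials $f_\Gamma$ for even cycles $\Gamma$ through $e$, whereas $I_{G-e}$ contains no monomials since it is prime; hence $\K[G]/(\bar e)$ and $\K[G-e]$ are genuinely different rings, and one cannot simply invoke the principle that regularity does not increase under quotient by a linear non-zero-divisor. Overcoming this appears to require the refined combinatorial input of \cite{ADS}: a carefully chosen degeneration, or a direct Betti-number estimate, that exploits normality and Cohen--Macaulayness of bipartite edge rings together with the structure of the edge polytope $P_G\subseteq\mathbb{R}^{V(G)}$ to guarantee that the extra monomial relations do not raise the regularity above $\reg(\K[G])$.
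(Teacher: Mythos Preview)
The paper does not prove this statement; it quotes it as \cite[Theorem 6.11]{ADS} and uses it as a black box. So there is no in-paper argument to compare against, and your proposal is being measured against the bare fact that the result is imported from Almousa--Dochtermann--Smith.

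Your reduction to a single edge deletion is clean and correct (and in the bridge case one can say more: a bridge $e$ lies on no cycle, hence appears in no generator $f_\Gamma$, so $\K[G]\cong\K[G-e][e]$ and the regularities are equal). The genuine gap is exactly where you say it is: the inequality $\reg(\K[G-e])\le\reg(\K[G])$ for a non-bridge edge $e$ is not established. Your sketch of a degeneration argument does not close it, and in fact the inequalities point the wrong way. Upper semicontinuity of regularity in a flat family gives $\reg(\text{special fibre})\ge\reg(\text{general fibre})$, so degenerating $\K[G]$ to an initial ring $R_0$ yields $\reg(R_0)\ge\reg(\K[G])$, not $\le$; and passing from $R_0$ to a quotient can further \emph{raise} regularity, so ``admits $\K[G-e]$ as a controlled quotient'' is not by itself a bound in the desired direction. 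You already observe that $\K[G]/(\bar e)$ is not $\K[G-e]$ because of the extra monomial relations coming from cycles through $e$, and you end by saying one needs ``the refined combinatorial input of \cite{ADS}''---but that is the very source of the theorem, so the proposal is ultimately circular. As written, the key step remains an assertion rather than a proof.
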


The next result summarizes some useful results in the literature.

\begin{theorem}\label{matching}
Let $G$ be a connected bipartite graph on
$n \geq 2$ vertices. 
\begin{enumerate}
\item[$(i)$] 
\cite[Theorem 1]{HH}
or \cite[Proposition 14.4.19]{Villarreal-book}
    $\reg(\K[G]) \leq \mat(G) - 1$.
    \item[$(ii)$] {\cite[Corollary 10.1.21]{Villarreal-book}}
    $\dim(\K[G])=n-1$.
    \item[$(iii)$] {\cite[Corollary 5.26]{HHO}}  $\K[G]$ is Cohen--Macaulay.
    \end{enumerate}
\end{theorem}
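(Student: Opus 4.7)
The plan is to prove each of the three statements by combining classical toric-geometric arguments with the specific structure of bipartite graphs. Since the three claims are essentially independent, I would handle them in the order $(ii) \Rightarrow (iii) \Rightarrow (i)$, since the dimension computation is the easiest, Cohen--Macaulayness unlocks Theorem \ref{Auslander}(ii), and the regularity bound benefits from knowing the ring is a normal domain.

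For part $(ii)$, the first step is to use the isomorphism $\K[G] \cong \im(\varphi)$ to reduce $\dim(\K[G])$ to the rank of the $\Z$-sublattice of $\Z^n$ spanned by the edge-incidence vectors $\{\mathbf{e}_i + \mathbf{e}_j : \{v_i,v_j\} \in E(G)\}$. For a connected graph $G$, this rank equals $n$ if $G$ contains an odd cycle and $n-1$ otherwise; for bipartite $G$ the sole dependence comes from the linear functional $\sum_{v \in V_1} \mathbf{e}_v^{\ast} - \sum_{v \in V_2} \mathbf{e}_v^{\ast}$ vanishing on every edge-vector, and connectedness rules out further dependences. This immediately gives $\dim(\K[G]) = n-1$.

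For part $(iii)$, the plan is to apply Hochster's theorem: a normal affine semigroup ring is Cohen--Macaulay. The key step is thus to verify normality of the semigroup generated by $\{\mathbf{e}_i + \mathbf{e}_j : \{v_i,v_j\} \in E(G)\}$ for bipartite $G$. One clean route is to show that $I_G$ admits a squarefree initial ideal with respect to a term order compatible with the bipartition (for instance, the one induced by a vertex ordering that lists $V_1$ before $V_2$), which forces the edge polytope to have a regular unimodular triangulation and hence the semigroup to be normal. I expect this normality check to be the main technical obstacle, since the analogous statement fails dramatically for non-bipartite graphs (where odd cycles obstruct normality).

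For part $(i)$, having established $(iii)$, I would exploit the Cohen--Macaulay property by passing to a squarefree initial ideal $\mathrm{in}(I_G)$ and using the standard bound $\reg(\K[G]) \leq \reg(\K[E]/\mathrm{in}(I_G))$. The next step is to interpret $\mathrm{in}(I_G)$ as the Stanley--Reisner ideal of a simplicial complex whose facets correspond combinatorially to matchings (or complements thereof) of $G$, and then bound the regularity of that Stanley--Reisner ring by $\mat(G) - 1$ via a direct combinatorial argument or an Alexander duality computation. Since all three parts are established in the cited references \cite{HH}, \cite{Villarreal-book}, and \cite{HHO}, the actual proof in the paper is simply an appeal to the literature; the outline above indicates how one would reconstruct the arguments from scratch.
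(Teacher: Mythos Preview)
You correctly identify that the paper offers no proof of this theorem at all: each of $(i)$--$(iii)$ is simply attributed to the literature (\cite{HH}, \cite{Villarreal-book}, \cite{HHO}) and quoted as a black box, so there is nothing to compare your argument against beyond your own closing sentence.

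Your reconstruction sketch is broadly sound but has a couple of soft spots worth flagging. For $(iii)$, the cleanest route to normality in the bipartite case is not via a squarefree initial ideal but via total unimodularity: the vertex--edge incidence matrix of a bipartite graph is totally unimodular, so the edge semigroup is saturated and Hochster applies immediately. Your parenthetical that ``odd cycles obstruct normality'' is inaccurate as stated --- a single odd cycle gives a normal ring, and the actual obstruction (the Ohsugi--Hibi/Simis--Vasconcelos--Villarreal odd-cycle condition) involves pairs of vertex-disjoint odd cycles with no chord between them. For $(i)$, the strategy of bounding regularity through an initial ideal is legitimate, but the assertion that the facets of the associated Stanley--Reisner complex ``correspond combinatorially to matchings'' is not quite right and would need to be made precise; the standard arguments in \cite{HH} and \cite{Villarreal-book} go instead through the $a$-invariant and the canonical module, exploiting the normality established in $(iii)$. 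None of this is a genuine gap given that the paper itself defers entirely to citations, but if you were to flesh out the sketch these are the places that would require the most care.
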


We get the following useful facts
as corollaries.

\begin{corollary}\label{reg upper/lower bound}\label{p=q-n+1}
Let $G$ be a connected bipartite graph
on $n \geq 2$ vertices with $q$ edges. 
\begin{enumerate}
\item[$(i)$] $\depth(\K[G]) = \dim(\K[G]) = n-1$.
\item[$(ii)$] $0\leq \deg(h_{\K[G]}(t)) = 
\reg(\K[G]) < \left\lfloor \frac{n}{2} \right\rfloor$.
\item[$(iii)$] $0\leq {\pdim}(\K[G]) = q-n+1$.
\end{enumerate}
\end{corollary}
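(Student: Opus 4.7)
All three statements are essentially immediate consequences of the results already assembled, so the plan is simply to chain them together in the right order.

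For $(i)$, the strategy is to invoke Theorem \ref{matching}: part $(iii)$ says $\K[G]$ is Cohen--Macaulay, which by definition gives $\depth(\K[G]) = \dim(\K[G])$, and part $(ii)$ identifies this common value as $n-1$.

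For $(ii)$, the Cohen--Macaulay property from Theorem \ref{matching}$(iii)$ together with Theorem \ref{Auslander}$(ii)$ yields $\deg(h_{\K[G]}(t)) = \reg(\K[G])$, and both quantities are nonnegative by definition. For the strict upper bound, I would chain Theorem \ref{matching}$(i)$ with the matching bound \eqref{matchbound}:
\[
\reg(\K[G]) \leq \mat(G) - 1 \leq \left\lfloor \tfrac{n}{2} \right\rfloor - 1 < \left\lfloor \tfrac{n}{2} \right\rfloor.
\]

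For $(iii)$, I would apply the Auslander--Buchsbaum formula (Theorem \ref{Auslander}$(i)$) to the polynomial ring $\K[E] = \K[e_1,\ldots,e_q]$ in $q$ variables and the ideal $I_G$, obtaining $\pdim(\K[G]) + \depth(\K[G]) = q$; substituting $\depth(\K[G]) = n-1$ from part $(i)$ gives $\pdim(\K[G]) = q - n + 1$. The nonnegativity $\pdim(\K[G]) \geq 0$ is either automatic from the definition, or equivalently follows from the fact that a connected graph on $n$ vertices has at least $n-1$ edges, so $q - n + 1 \geq 0$.

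There is no serious obstacle here; the only mild subtlety is making sure that the Auslander--Buchsbaum formula is applied in the ambient ring $\K[E]$ (so that the ``$m$'' in Theorem \ref{Auslander}$(i)$ is $q$, not $n$), matching the convention from the paragraph introducing $I_G$.
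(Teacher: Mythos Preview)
Your proposal is correct and follows essentially the same approach as the paper's proof: both parts invoke Theorem~\ref{matching} for the Cohen--Macaulay property and dimension, Theorem~\ref{Auslander}$(ii)$ together with the matching bound~\eqref{matchbound} for the regularity statement, and Auslander--Buchsbaum in the ambient ring $\K[E]$ for the projective dimension formula. Your explicit remark that the ``$m$'' in Theorem~\ref{Auslander}$(i)$ must be $q$ is exactly the one point the paper also notes.
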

\begin{proof}
By Theorem \ref{matching} $(iii)$, the ring
$\K[G]$ is Cohen--Macaulay.  Statement
$(i)$ now follows from the definition of
Cohen--Macaulayness and Theorem
\ref{matching} $(ii)$.   For statement
$(ii)$, 
because $\K[G]$
is Cohen--Macaulay, Theorem \ref{Auslander} $(ii)$
implies that  $\deg(h_{\K[G]}(t)) = 
\reg(\K[G])$.  The inequality 
follows from Theorem \ref{matching} $(i)$
and inequality \eqref{matchbound}.
For statement $(iii)$, Theorem \ref{Auslander} $(i)$
gives $\pdim(\K[G]) = q - \depth(\K[G]) = q-n+1$
since $\K[E]$ has $q$ variables.
\end{proof}

For some special families of graphs, we can 
give exact values for the regularity.

\begin{lemma} \label{reg of complete bipartite}\label{reg of cycle}
The following formulas hold:
\begin{enumerate}
    \item[$(i)$] if $G=K_{a,b}$, then $\reg(\K[G])=\min\{a,b\}-1$;
    \item[$(ii)$] if $G =C_{2r}$ with $r\geq 2$, then $\reg(\K[G]) = r-1$.
\end{enumerate}
\end{lemma}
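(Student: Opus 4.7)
The plan is to prove part (ii) first because it produces an explicit numerical value for the regularity of an even cycle, which can then be transported to the complete bipartite graph as a lower bound via the subgraph containment machinery of Theorem \ref{reg of subgraph}. The upper bound in part (i) will then come essentially for free from Theorem \ref{matching}(i).

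For part (ii), I would start by applying Theorem \ref{toricgenerators}. Since $C_{2r}$ is a bipartite graph whose only cycle is itself, and this cycle has even length $2r$, the ideal $I_{C_{2r}}$ has exactly one binomial generator, namely
\[f_{C_{2r}} = e_1 e_3 \cdots e_{2r-1} - e_2 e_4 \cdots e_{2r},\]
a form of degree $r$. Thus $I_{C_{2r}}$ is a principal ideal in $S = \K[e_1,\ldots,e_{2r}]$, so the minimal graded free resolution reads
\[0 \longrightarrow S(-r) \longrightarrow S \longrightarrow \K[C_{2r}] \longrightarrow 0,\]
and $\reg(\K[C_{2r}]) = r-1$ is read off immediately from the single shift.

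For part (i), assume without loss of generality that $a \leq b$, so $\min\{a,b\} = a$. Since a maximum matching of $K_{a,b}$ has size $a$, Theorem \ref{matching}(i) gives the upper bound $\reg(\K[K_{a,b}]) \leq a - 1$. For the matching lower bound, the zig-zag $x_1, y_1, x_2, y_2, \ldots, x_a, y_a, x_1$ exhibits $C_{2a}$ as a connected subgraph of $K_{a,b}$ on at least two vertices. Theorem \ref{reg of subgraph} then yields $\reg(\K[C_{2a}]) \leq \reg(\K[K_{a,b}])$, and part (ii) supplies $\reg(\K[C_{2a}]) = a - 1$, closing the sandwich.

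There is no serious obstacle in this proof; it is bookkeeping given what has already been assembled. The one point requiring a brief justification is that $I_{C_{2r}}$ really is generated by a single binomial — this rests on the observation that $C_{2r}$ has exactly one even cycle (itself), so the generating set provided by Theorem \ref{toricgenerators} has size one. Once that is noted, both statements follow directly.
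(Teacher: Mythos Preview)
Your argument for part~(ii) is exactly the paper's: invoke Theorem~\ref{toricgenerators} to see that $I_{C_{2r}}$ is principal of degree~$r$, and read off the regularity from the Koszul resolution.

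For part~(i) the approaches diverge. The paper simply cites an external reference (\cite[Corollary~4.11]{BOVT}) and moves on. You instead give a self-contained sandwich argument using only tools already assembled in the paper: the upper bound from Theorem~\ref{matching}(i) via $\mat(K_{a,b}) = \min\{a,b\}$, and the lower bound by embedding $C_{2a}$ (with $a = \min\{a,b\}$) as a connected subgraph and applying Theorem~\ref{reg of subgraph} together with the freshly proved part~(ii). This is a genuinely nicer route for the purposes of the paper, since it avoids the external dependency and keeps everything internal.

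One small gap: your lower-bound step requires $a \geq 2$, since part~(ii) is only stated for $r \geq 2$ and $C_2$ is not a simple graph. When $a = 1$ the graph $K_{1,b}$ is a tree, so $I_G = \langle 0\rangle$ and $\reg(\K[K_{1,b}]) = 0 = \min\{1,b\}-1$ directly (or just note that the upper bound already forces $\reg \leq 0$, hence equality). You should dispatch this case in a sentence before running the zig-zag argument.
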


\begin{proof}
    For $(i)$, see \cite[Corollary 4.11]{BOVT}. For $(ii)$,
    Theorem \ref{toricgenerators} implies $I_{C_{2r}}$
    is a principal ideal with a minimal generator of degree $r$. 
    The conclusion follows from this fact.
\end{proof}

We also require a classification
of the toric ideals of  bipartite graphs with regularity and projective dimension
equal to zero.

\begin{lemma}\label{reg of tree}
Let $G$ be a bipartite graph  on $n\geq2$ vertices.  Then the following are equivalent:
\begin{enumerate}
    \item[$(i)$] $G$ is a forest.
    \item[$(ii)$] ${\rm reg}(\K[G])=0$.
    \item[$(iii)$] ${\rm pdim}(\K[G]) =0$.
\end{enumerate}
\end{lemma}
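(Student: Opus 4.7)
The plan is to use Theorem \ref{toricgenerators} as the principal engine: for bipartite $G$, $I_G$ is generated by the binomials $f_\Gamma$ arising from the even cycles of $G$. The implications $(i)\Rightarrow (ii)$ and $(i)\Rightarrow (iii)$ then follow at once: if $G$ is a forest then $G$ has no cycles, hence no even cycles, so $I_G=(0)$ and $\K[G]\cong \K[E]$ is a polynomial ring, giving $\reg(\K[G])=\pdim(\K[G])=0$.

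For the converses I would prove the contrapositive: if $G$ is not a forest then both $\reg(\K[G])\geq 1$ and $\pdim(\K[G])\geq 1$. If $G$ is not a forest, it contains a cycle, and since $G$ is bipartite this cycle has even length $2r$ with $r\geq 2$. Because the lemma does not assume $G$ connected while the tools I want to use (Theorem \ref{reg of subgraph} and Corollary \ref{p=q-n+1}$(iii)$) do, I would first pass to the connected component $G'$ of $G$ that contains this $C_{2r}$. The generators $f_\Gamma$ from even cycles live in disjoint sets of edge variables for different components, so $\K[G]$ factors as a tensor product $\bigotimes_i \K[G_i]$ over the connected components $G_i$ of $G$ with at least one edge. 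Under this decomposition both $\reg$ and $\pdim$ are additive, so $\reg(\K[G])\geq \reg(\K[G'])$ and $\pdim(\K[G])\geq \pdim(\K[G'])$.

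To finish $(ii)\Rightarrow (i)$, I would apply Theorem \ref{reg of subgraph} to the connected bipartite subgraph $C_{2r}\subseteq G'$, each with at least two vertices; combined with Lemma \ref{reg of cycle}$(ii)$ this yields $\reg(\K[G'])\geq \reg(\K[C_{2r}])=r-1\geq 1$, a contradiction. To finish $(iii)\Rightarrow (i)$, I would apply Corollary \ref{p=q-n+1}$(iii)$ to the connected bipartite graph $G'$ on $n'$ vertices with $q'$ edges: since $G'$ contains a cycle we have $q'\geq n'$, and so $\pdim(\K[G'])=q'-n'+1\geq 1$, again a contradiction.

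The only real obstacle is reconciling the possible disconnectedness of $G$ with the connected hypothesis of Theorem \ref{reg of subgraph} and Corollary \ref{p=q-n+1}; this is dispatched cleanly by the tensor-product decomposition together with the additivity of $\reg$ and $\pdim$ over disjoint polynomial variables. After that, each of $(ii)$ and $(iii)$ reduces to a short computation on a single cyclic subgraph or a single edge count, and no further technicalities arise.
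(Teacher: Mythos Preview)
Your proof is correct, but it takes a considerably more elaborate route than the paper's. The paper dispatches the converse in one line: if $G$ is not a forest then it has an even cycle, so by Theorem~\ref{toricgenerators} $I_G \neq \langle 0 \rangle$; since every generator $f_\Gamma$ has degree at least $2$, there is some $\beta_{1,j}(\K[G])\neq 0$ with $j\geq 2$, and this alone forces $\reg(\K[G])\geq 1$ and $\pdim(\K[G])\geq 1$. No subgraph bounds, no edge counts, and no connectedness hypothesis are needed, so the tensor-product detour through components never arises.

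Your approach, by contrast, invokes Theorem~\ref{reg of subgraph}, Lemma~\ref{reg of cycle}$(ii)$, and Corollary~\ref{p=q-n+1}$(iii)$, and must reduce to connected components because those results assume connectedness. This is all valid---the additivity of $\reg$ and $\pdim$ over tensor products in disjoint variables is standard---but it is heavy machinery for what is ultimately the observation that a nonzero homogeneous ideal generated in degree $\geq 2$ has positive regularity and projective dimension. What your argument does buy is an explicit quantitative bound $\reg(\K[G])\geq r-1$ in terms of the length of an even cycle in $G$, which the paper's one-line proof does not record; but for the bare equivalence in the lemma, the paper's direct argument is the cleaner path.
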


\begin{proof}
If $G$ is a forest, then $G$ has no even cycles, so $I_G = \langle 0 \rangle$ by Theorem \ref{toricgenerators}.  Thus
${\rm reg}(\K[G]) = {\rm pdim}(\K[G]) = 0$.
Conversely, if $G$ is not a forest, then $G$ has at least 
one even cycle.  So, by Theorem \ref{toricgenerators}, 
$I_G \neq \langle 0 \rangle$, from which it
follows that both the regularity and projective
dimension of $\K[G]$ are non-zero.
\end{proof}

\begin{remark}
    The previous lemma is only classifying 
    bipartite graphs with regularity and projective dimension zero.  There are non-bipartite graphs whose toric ideals have
    regularity and projective dimension zero, e.g.,
    the toric ideals of  $C_{2r+1}$ with 
    $r \geq 1$.
\end{remark}

\begin{remark}
    While we have only highlighted the
    results about the regularity of toric
    ideals of bipartite graphs that  we require, we point 
    out that a number of other results are known
    from the perspective of the $a$-invariant (we thank
    Rafael Villarreal for pointing out this connection). Given a homogeneous 
    ideal 
    $I \subseteq S = \K[x_1,\ldots,x_m]$, the
    {\it $a$-invariant} of $S/I$, denoted $a(S/I)$,
    is the degree of ${\rm HS}_{S/I}(t)$ as a rational
    function (in lowest terms).  So, 
    $a(S/I) = \deg h_{S/I}(t) - \dim(S/I)$.  
    When $G$ is a connected bipartite graph,
    we can use Corollary 
    \ref{p=q-n+1} to show that
    $a(\K[G])= \reg(\K[G])-n+1$.  Consequently,
    studying the regularity of toric ideals of
    connected bipartite graphs is equivalent to studying their $a$-invariant.
    In \cite{VV},
    Valencia and Villarreal gave a
    combinatorial interpretation for the 
    $a$-invariant, and a linear program
    to compute this invariant.  
    Also see  \cite[Section 11.5]{Villarreal-book} for other properties of $a(\K[G])$;
    for example, Lemma \ref{reg of complete bipartite} $(i)$ is equivalent to    \cite[Corollary 11.5.2]{Villarreal-book} which computes the $a$-invariant
    for the toric ideal of $K_{a,b}$.
    Remark \ref{codegree} gives another
    combinatorial interpretation of ${\reg}(\K[G])$ for bipartite graphs.
\end{remark}

\section{Comparing the regularity and projective dimension}

Let $\text{CBPT}(n)$ denote the set of  connected bipartite
graphs on $n$ vertices.  In this section, we focus
on comparing the regularity and
projective dimension of the toric ideals of graphs
$G$ with $G \in \text{CBPT}(n)$.  In particular, we
describe the set
\[\text{CBPT}^\text{pdim}_\text{reg} (n) = \{(\reg(\K[G]), \pdim(\K[G])) : G \in \text{CBPT}(n)\}.\] 
Understanding this set will be key to proving
Theorem \ref{maintheorem}.

We begin with a simple inequality that we will use in subsequent lemmas.
\begin{lemma}\label{r+1 leq n-r-1}
    Let $n$ and $r$ be integers. If $r\leq \left \lfloor{\frac{n}{2}}\right \rfloor-1$, then $0 \leq n-2-2r$.
\end{lemma}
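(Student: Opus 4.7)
The plan is to chase definitions and use a single elementary inequality about the floor function, namely $2\lfloor n/2 \rfloor \le n$, which holds for every integer $n$. Starting from the hypothesis $r \le \lfloor n/2 \rfloor - 1$, I would multiply by $2$ (preserving the inequality since $2 > 0$) to obtain $2r \le 2\lfloor n/2 \rfloor - 2$, and then combine with $2\lfloor n/2 \rfloor \le n$ to get $2r \le n - 2$, which is equivalent to $0 \le n - 2 - 2r$.

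If a case split is desired for extra clarity, I would separately check the parity of $n$: when $n$ is even, $\lfloor n/2 \rfloor = n/2$, so the hypothesis reads $r \le n/2 - 1$, which rearranges directly to $n - 2 - 2r \ge 0$; when $n$ is odd, $\lfloor n/2 \rfloor = (n-1)/2$, so the hypothesis reads $r \le (n-3)/2$, which rearranges to $n - 2 - 2r \ge 1 \ge 0$. Either route yields the conclusion.

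I do not expect any obstacle here: the lemma is a bookkeeping inequality that will be invoked later to ensure quantities of the form $r(n-2-r)$ or $n-2-2r$ are nonnegative when $r$ ranges in the interval appearing in Theorem \ref{maintheorem}. I would favor the one-line proof via $2\lfloor n/2\rfloor \le n$ over the case split for brevity.
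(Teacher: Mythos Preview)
Your argument is correct and essentially identical to the paper's: both use $\lfloor n/2\rfloor \le n/2$ to pass from $r \le \lfloor n/2\rfloor - 1$ to $2r \le n-2$, which is the desired inequality. No further comment is needed.
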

\begin{proof} 
Observe that $r \leq \left \lfloor{\frac{n}{2}}\right \rfloor-1 \leq \frac{n}{2}-1=\frac{n-2}{2}.$ Hence $2r \leq n-2$, so $0 \leq n-2-2r$.
\end{proof}
The following two lemmas give ranges of positive integers $r$ and $p$ that can be realized as $r=\reg(\K[G])$ and $p=\pdim(\K[G])$ for some connected bipartite graph $G$. More precisely, Lemma \ref{lower existence} (resp. Lemma \ref{upper existence}) shows that any $n\geq4$, $0 < r < \left \lfloor{\frac{n}{2}}\right \rfloor$ and  $1\leq p \leq r^2$ (resp. $r^2 \leq p \leq r(n-2-r)$) can be realized as $r=\reg(\K[G])$ and $p=\pdim(\K[G])$ for some connected bipartite graph $G$ on $n$ vertices. (Note that the $p=r^2$ case is covered twice.)
\begin{lemma}\label{lower existence}
Let $n$, $r$, $p$ be integers with $n\geq 4$, $0< r < \left \lfloor{\frac{n}{2}}\right \rfloor$, and $1\leq p \leq r^2$. Then there exists a connected bipartite graph $G$ on $n$ vertices with $\reg(\K[G])=r$ and $\pdim(\K[G])=p$. 
\end{lemma}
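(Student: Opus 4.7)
The plan is to build the desired graph $G$ in two stages: first construct a subgraph $H_p$ on $2r+2$ vertices with the correct invariants inside $K_{r+1,r+1}$, and then pad out to $n$ vertices by attaching pendants.

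First, I would fix a Hamiltonian cycle $C\cong C_{2(r+1)}$ in $K_{r+1,r+1}$; it uses $2r+2$ of the $(r+1)^2$ edges, leaving exactly $r^2-1$ edges of $K_{r+1,r+1}$ outside $C$. For each $p$ with $1\leq p\leq r^2$, define $H_p$ to be the graph obtained from $C$ by adjoining any $p-1$ of these remaining edges. Then $H_p$ is connected (it contains the spanning cycle $C$), bipartite (as a subgraph of $K_{r+1,r+1}$), has $2r+2$ vertices, and has $2r+1+p$ edges, so Corollary~\ref{p=q-n+1}(iii) yields $\pdim(\K[H_p])=p$.

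To pin down the regularity, I would apply Theorem~\ref{reg of subgraph} to the chain of connected subgraphs $C\subseteq H_p\subseteq K_{r+1,r+1}$, obtaining
\[\reg(\K[C])\;\leq\;\reg(\K[H_p])\;\leq\;\reg(\K[K_{r+1,r+1}]).\]
By Lemma~\ref{reg of cycle} and Lemma~\ref{reg of complete bipartite}, both outer terms equal $r$, so $\reg(\K[H_p])=r$.

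To reach $n$ vertices I would attach $n-(2r+2)$ pendants to $H_p$ one at a time, placing each new vertex in the part of the bipartition opposite its neighbor so that the resulting graph $G$ stays connected and bipartite; Lemma~\ref{r+1 leq n-r-1} ensures $n-(2r+2)\geq 0$. The step I expect to require the most care is verifying that attaching a pendant preserves both $\reg$ and $\pdim$ of the edge-ring quotient. A pendant edge cannot appear in any closed walk, so by Theorem~\ref{toricgenerators} the generators of $I_G$ and $I_{H_p}$ coincide as binomials; equivalently, $\K[G]$ is obtained from $\K[H_p]$ by adjoining the pendant-edge variables as free polynomial variables, an operation that preserves both invariants. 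As an independent cross-check for the projective dimension, each pendant increases the vertex count and the edge count by one, leaving $q-n+1$ unchanged in Corollary~\ref{p=q-n+1}(iii).
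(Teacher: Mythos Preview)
Your construction is essentially the paper's: a $(2r+2)$-cycle inside $K_{r+1,r+1}$ plus $p-1$ extra chords, then pendants to reach $n$ vertices; the paper just builds the whole graph at once and bounds the regularity via $G\subseteq K_{r+1,n-r-1}$ rather than via your two-stage ``sandwich $H_p$, then show pendants are harmless'' argument. One small correction: a pendant edge \emph{can} appear in a closed even walk (go out and back), so your sentence as written is false; what you need is that a pendant edge cannot lie on an even \emph{cycle}, and then the bipartite clause of Theorem~\ref{toricgenerators} gives that $I_G$ is the extension of $I_{H_p}$ to the larger polynomial ring, whence $\K[G]\cong\K[H_p][\,\text{pendant variables}\,]$ and both invariants are preserved, exactly as you conclude.
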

\begin{proof}
We describe the construction of the desired graph. Letting $n$, $r$, $p$ be as given, we define the bipartite
graph $G_{n,r,p}$ as follows. Let our vertex set $V$ be 
$$V = \{x_1,\ldots,x_{r+1},y_1,\ldots,y_{r+1},z_1,\ldots,z_{n-2r-2}\}$$
with bipartition $V_1 = \{x_1,\ldots,x_{r+1}\}$ and
$V_2 = \{y_1,\ldots,y_{r+1},z_1,\ldots,z_{n-2r-2}\}$.
To define our edge set, let $E_1$ and $E_2$ be
\begin{align*}
E_1 & = \{\{x_1,y_1\},\{y_1,x_2\},\{x_2,y_2\},\{y_2,x_3\},
\ldots, \{x_{r+1},y_{r+1}\},\{y_{r+1},x_1\}\}; \\
E_2 &= \{\{x_{r+1},z_j\} ~|~ 1 \leq j \leq n-2r-2\}.
\end{align*}
(Note that $n-2r-2 \geq0$ by Lemma \ref{r+1 leq n-r-1}. If $n-2r-2 = 0$, then $E_2 = \emptyset$ and
there are no $z_i$ vertices.)
Note $E_1$ is a cycle of length $2r+2$.  Let 
$E_3$ be any $p-1 \geq 0$ edges with one vertex 
in $\{x_1,\ldots,x_{r+1}\}$ and the other
in $\{y_1,\ldots,y_{r+1}\}$ that do not already 
appear in $E_1$.  Because we can have at most
$(r+1)^2 = r^2+2r+1$ edges between the $x_i$'s and
$y_j$'s, and since 
$E_1$ already used $2r+2$ of these edges,  there
are only $r^2-1 = r^2+2r+1-(2r+2)$ possible
choices for these $p-1$ edges.  Since $p \leq r^2$,
it is possible to find $p-1$ such edges. Let $E = E_1 \cup
E_2 \cup E_3$ be the edge set of $G_{n,r,p}$.
 See Figure \ref{fig:lower} for an example of the graph $G_{n,r,p}$ for $n=10, r=3$, and $p=2$. We claim that $G=G_{n,r,p}$ is a connected bipartite graph on $n$ vertices
 with $\reg(\K[G])=r$ and $\pdim(\K[G])=p$. 

By construction $G$ is 
a connected bipartite graph on $n = (r+1)+(r+1)+(n-2r-2)$ vertices with $q = 2r+2+(n-2r-2)+
(p-1) = n+p-1$ edges.
Observe that $G$ is a subgraph of the 
complete bipartite graph $K_{r+1,n-r-1}$.
Since $r+1 \leq n-r-1$ by Lemma \ref{r+1 leq n-r-1}, by Theorem \ref{reg of subgraph} and
Lemma \ref{reg of complete bipartite} $(i)$,
we have
\[
\reg(\K[G]) \leq \reg(\K[K_{r+1,n-r-1}]) = \min \{r+1,n-r-1\} -1= r.
\]
Since $G$ contains the cycle $C_{2r+2}$ as a subgraph (the edges of $E_1$ form this cycle) we have $r =  \reg(\K[C_{2r+2}]) \leq \reg(\K[G])$ by Theorem \ref{reg of subgraph} and Lemma \ref{reg of cycle} $(ii)$. Thus $\reg(\K[G])=r$.  Furthermore,
it follows from Corollary \ref{p=q-n+1} $(iii)$ that
\[\pdim(\K[G]) = q-n+1 = (n+p-1) -n+1 = p,\]
as desired.
\end{proof}

\begin{figure}[t]
\begin{tikzpicture}[scale=0.6]
 
\draw (0,0) -- (0,3) -- (3,0) -- (3,3) --
(6,0) -- (6,3) -- (9,0) -- (9,3) -- (0,0);
\draw (9,0) -- (12,3);
\draw (9,0) -- (15,3);
\draw (0,0) -- (3,3);

\fill[fill=white,draw=black] (0,0) circle (.2)
node[label=below:$x_1$] {};
\fill[fill=white,draw=black] (3,0) circle (.2)
node[label=below:$x_2$] {};
\fill[fill=white,draw=black] (6,0) circle (.2)
node[label=below:$x_3$] {};
\fill[fill=white,draw=black] (9,0) circle (.2)
node[label=below:$x_4$] {};
\fill[fill=white,draw=black] (0,3) circle (.2)
node[label=above:$y_1$] {};
\fill[fill=white,draw=black] (3,3) circle (.2)
node[label=above:$y_2$] {};
\fill[fill=white,draw=black] (6,3) circle (.2)
node[label=above:$y_3$] {};
\fill[fill=white,draw=black] (9,3) circle (.2)
node[label=above:$y_4$] {};
\fill[fill=white,draw=black] (12,3) circle (.2)
node[label=above:$z_1$] {};
\fill[fill=white,draw=black] (15,3) circle (.2)
node[label=above:$z_2$] {};

\end{tikzpicture}
\caption{The graph $G =G_{10,3,2}$
with $\reg(\K[G]) =3$ and $\pdim(\K[G])=2$
}\label{fig:lower}
\end{figure}
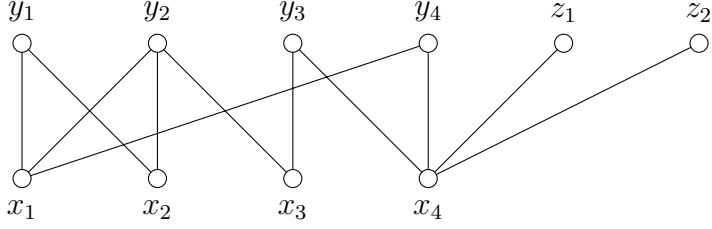
\begin{lemma}\label{upper existence}
Let $n$, $r$, $p$ be integers with $n\geq 4$, $0< r < \left \lfloor{\frac{n}{2}}\right \rfloor$, and $r^2 \leq p \leq r(n-2-r)$. Then there exists a connected  bipartite graph $G$ on $n$ vertices with $\reg(\K[G])=r$ and $\pdim(\K[G])=p$. 
\end{lemma}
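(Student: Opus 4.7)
The plan is to mimic the construction in Lemma \ref{lower existence}, but to enlarge the pool of candidate ``extra'' edges so that the range of attainable projective dimensions extends beyond $r^2$ all the way up to $r(n-2-r)$. Specifically, I would keep exactly the same vertex set
\[V = \{x_1,\ldots,x_{r+1}\} \cup \{y_1,\ldots,y_{r+1}\} \cup \{z_1,\ldots,z_{n-2r-2}\},\]
with bipartition $V_1 = \{x_1,\ldots,x_{r+1}\}$ and $V_2 = \{y_1,\ldots,y_{r+1}\} \cup \{z_1,\ldots,z_{n-2r-2}\}$, together with the same cycle-edge set $E_1$ (forming $C_{2r+2}$ on the $x$'s and $y$'s) and the same ``fan'' set $E_2 = \{\{x_{r+1},z_j\} \mid 1\le j\le n-2r-2\}$ from the proof of Lemma \ref{lower existence}. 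Note $n-2r-2 \geq 0$ by Lemma \ref{r+1 leq n-r-1}.

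The only modification is to the definition of $E_3$: now I allow $E_3$ to be any collection of $p-1$ edges of $K_{r+1,n-r-1}$ that do not already lie in $E_1\cup E_2$. A direct count gives
\[(r+1)(n-r-1) - (2r+2) - (n-2r-2) = r(n-2-r)-1\]
such candidate edges, so the choice is feasible precisely when $p-1 \le r(n-2-r)-1$, which is exactly the hypothesis $p \le r(n-2-r)$. Setting $G = (V, E_1\cup E_2\cup E_3)$ produces a connected bipartite graph on $n$ vertices with $q = 2r+2 + (n-2r-2) + (p-1) = n+p-1$ edges.

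From this point the verification is identical to that of Lemma \ref{lower existence}. The inclusion $G \subseteq K_{r+1,n-r-1}$, combined with Theorem \ref{reg of subgraph}, Lemma \ref{reg of complete bipartite}$(i)$, and the inequality $r+1\le n-r-1$ from Lemma \ref{r+1 leq n-r-1}, gives $\reg(\K[G]) \le r$; conversely, $E_1$ exhibits $C_{2r+2}$ as a subgraph of $G$, so Theorem \ref{reg of subgraph} and Lemma \ref{reg of cycle}$(ii)$ give $\reg(\K[G]) \ge r$. Finally, Corollary \ref{p=q-n+1}$(iii)$ yields $\pdim(\K[G]) = q-n+1 = p$. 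There is no real obstacle here beyond bookkeeping: the only thing to double-check is that the count $r(n-2-r)-1$ of available extra edges matches the upper hypothesis $p\le r(n-2-r)$, so that the construction is feasible throughout the claimed range. The degenerate case $n = 2r+2$ forces $n-2r-2=0$ and $r(n-2-r)=r^2$, so the only permissible value is $p=r^2$; the construction collapses to the $p=r^2$ case of Lemma \ref{lower existence}, consistent with the stated overlap between the two lemmas.
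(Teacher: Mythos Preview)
Your argument is correct, and it actually yields a slightly stronger statement than Lemma~\ref{upper existence}: your construction works for every $1 \le p \le r(n-2-r)$, not just $p \ge r^2$, so it proves Lemmas~\ref{lower existence} and~\ref{upper existence} in a single stroke. The paper takes a different route. Rather than keeping the cycle $C_{2r+2}$ and widening the pool of extra edges, the paper replaces the base edge set $E_1$ by \emph{all} of $K_{r+1,r+1}$ on $\{x_1,\dots,x_{r+1},y_1,\dots,y_{r+1}\}$, keeps the same fan $E_2$, and then takes $E_3$ to be $p-r^2$ edges between $\{x_1,\dots,x_r\}$ and the $z_j$'s. The lower bound $\reg(\K[G]) \ge r$ then comes from the subgraph $K_{r+1,r+1}$ via Lemma~\ref{reg of complete bipartite}$(i)$ rather than from a long cycle. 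The paper's choice is thematically tied to the threshold $p=r^2$ (the base graph already has projective dimension $r^2$, and one adds edges from there), which explains why the two lemmas are stated separately; your version is more uniform and shows that the split at $p=r^2$ is an artifact of the exposition rather than of the mathematics.
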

\begin{proof}
We describe the construction of the desired graph.  Letting $n$, $r$, $p$ be as given, we define the bipartite
graph $H_{n,r,p}$ as follows. Let our vertex set $V$ be 
\[V = \{x_1,\ldots,x_{r+1},y_1,\ldots,y_{r+1},z_1,\ldots,z_{n-2r-2}\}\]
with bipartition $V_1 = \{x_1,\ldots,x_{r+1}\}$ and
$V_2 = \{y_1,\ldots,y_{r+1},z_1,\ldots,z_{n-2r-2}\}$.
To define our edge set, let $E_1$ and $E_2$ be
\begin{align*}
E_1 & = \{\{x_i,y_j\} ~|~ 1 \leq i,j \leq r+1\}; \\
E_2 &= \{\{x_{r+1},z_j\} ~|~ 1 \leq j \leq n-2r-2\}.
\end{align*}
(Note that $n-2r-2 \geq0$ by Lemma \ref{r+1 leq n-r-1}. If $n-2r-2 = 0$, then $E_2 = \emptyset$ and
there are no $z_i$ vertices.) Note that there can be at 
most $r(n-2r-2) = r(n-2-r) -r^2$ edges between $\{x_1,\ldots,x_{r}\}$
and $\{z_1,\ldots,z_{n-2r-2}\}$.
Let $E_3$ be a set 
containing any $p-r^2 \geq 0$ of these
edges. Observe
that our hypotheses imply that 
$r(n-2r-2) \geq p-r^2$ so it possible to find
$p-r^2$ such edges.
Let $E = E_1 \cup E_2
\cup E_3$ be the edge set of $H_{n,r,p}$.
See Figure \ref{fig:upper} for an example of the graph $H_{n,r,p}$ for $n=10, r=3$, and $p=12$. . We claim that $G=H_{n,r,p}$ is a connected bipartite graph on $n$ vertices with $\reg(\K[G])=r$ and $\pdim(\K[G])=p$.

By construction $G$ is  a connected bipartite graph on $n = (r+1)+(r+1)+(n-2r-2)$ vertices with 
$q = (r+1)^2+(n-2r-2)+(p-r^2)
= n+p-1$ edges. 
Observe that $G$ is a subgraph of the 
complete bipartite graph $K_{r+1,n-r-1}$.
Since $r+1 \leq n-r-1$ by Lemma \ref{r+1 leq n-r-1}, by Theorem \ref{reg of subgraph} and
Lemma \ref{reg of complete bipartite} $(i)$,
we have
\[
\reg(\K[G]) \leq \reg(\K[K_{r+1,n-r-1}]) = \min \{r+1,n-r-1\} -1= r.
\]
On the other hand, note that $G$ contains the complete bipartite graph $K_{r+1,r+1}$ 
on $\{x_1,\ldots,x_{r+1},y_1,\ldots,y_{r+1}\}$.
Theorem \ref{reg of subgraph} and Lemma
\ref{reg of complete bipartite} $(i)$ imply that 
$r \leq \reg(\K[G])$.  Thus $\reg(\K[G])=r$, as 
desired.
 Furthermore,
it follows from Corollary \ref{p=q-n+1} $(iii)$ that
\[\pdim(\K[G]) = q-n+1=(n+p-1)-n+1 = p,\] completing the proof.
\end{proof}

\begin{figure}[t]
\begin{tikzpicture}[scale=0.6]
 
\draw (0,0) -- (0,3) -- (3,0) -- (3,3) --
(6,0) -- (6,3) -- (9,0) -- (9,3) -- (0,0);
\draw (9,0) -- (12,3);
\draw (9,0) -- (15,3);
\draw (0,0) -- (3,3);
\draw (0,0) -- (6,3);
\draw (0,0) -- (9,3);
\draw (3,0) -- (6,3);
\draw (3,0) -- (9,3);
\draw (6,0) -- (0,3);
\draw (6,0) -- (9,3);
\draw (9,0) -- (0,3);
\draw (9,0) -- (3,3);
\draw (0,0) -- (12,3);
\draw (0,0) -- (15,3);
\draw (3,0) -- (12,3);
\draw (3,0) -- (15,3);

\fill[fill=white,draw=black] (0,0) circle (.2)
node[label=below:$x_1$] {};
\fill[fill=white,draw=black] (3,0) circle (.2)
node[label=below:$x_2$] {};
\fill[fill=white,draw=black] (6,0) circle (.2)
node[label=below:$x_3$] {};
\fill[fill=white,draw=black] (9,0) circle (.2)
node[label=below:$x_4$] {};
\fill[fill=white,draw=black] (0,3) circle (.2)
node[label=above:$y_1$] {};
\fill[fill=white,draw=black] (3,3) circle (.2)
node[label=above:$y_2$] {};
\fill[fill=white,draw=black] (6,3) circle (.2)
node[label=above:$y_3$] {};
\fill[fill=white,draw=black] (9,3) circle (.2)
node[label=above:$y_4$] {};
\fill[fill=white,draw=black] (12,3) circle (.2)
node[label=above:$z_1$] {};
\fill[fill=white,draw=black] (15,3) circle (.2)
node[label=above:$z_2$] {};

\end{tikzpicture}
\caption{The graph $G= H_{10,3,12}$ with $\reg(\K[G]) = 3$ and  
$ \pdim(\K[G]) = 12$}\label{fig:upper}
\end{figure}
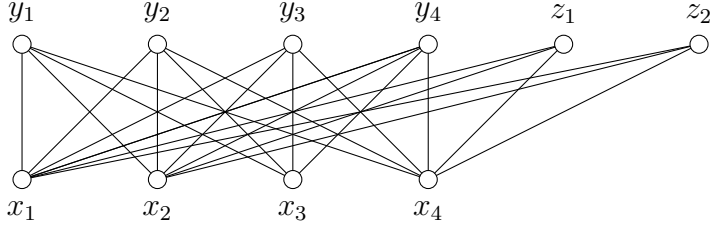

The next result, which is of independent
interest, provides an upper  bound on the number
of edges in a bipartite graph $G$ in terms
of the regularity.

\begin{lemma}\label{edge lemma}
    Let $G$ be a connected bipartite graph on $n \geq 2$ vertices. If $\reg(\K[G])=r$, then $G$ has at most $(r+1)(n-r-1)$ edges.
\end{lemma}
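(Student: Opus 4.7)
My plan is to translate the regularity hypothesis into a forbidden-cycle-length condition for $G$ and then apply Jackson's extremal edge count (Theorem \ref{Jackson lemma}) contrapositively. Write the bipartition of $G$ as $V = A \cup B$ with $|A| = a \leq b = |B|$ and $a + b = n$; the target inequality is $|E(G)| \leq (r+1)(n-r-1)$.

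First I would establish the lower bound $a \geq r+1$ on the smaller part of the bipartition. Since $G$ is a connected subgraph of $K_{a,b}$, Theorem \ref{reg of subgraph} combined with Lemma \ref{reg of complete bipartite} $(i)$ yields $r = \reg(\K[G]) \leq \reg(\K[K_{a,b}]) = a-1$, so $a \geq r+1$ as claimed.

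Next I would split into two cases. If $a = r+1$, then the trivial bipartite bound gives $|E(G)| \leq ab = (r+1)(n-r-1)$ immediately. So assume $a \geq r+2$. The key observation is that $G$ cannot contain a cycle of length $2s$ with $s \geq r+2$: such a cycle $C_{2s}$ is a connected subgraph with $\reg(\K[C_{2s}]) = s-1 \geq r+1$ by Lemma \ref{reg of cycle} $(ii)$, which contradicts $\reg(\K[G]) = r$ via Theorem \ref{reg of subgraph}. I would then apply the contrapositive of Theorem \ref{Jackson lemma} with $m = r+2$; the hypotheses $2 \leq m \leq a \leq b$ are satisfied because $r \geq 0$ and $a \geq r+2$. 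This yields either $|E(G)| \leq a + (b-1)(r+1)$ when $a \leq 2r+2$, or $|E(G)| \leq (n-2r-1)(r+1)$ when $a \geq 2r+2$.

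The final step is a routine verification that both Jackson bounds are at most $(r+1)(n-r-1)$. The sub-case $a \geq 2r+2$ is immediate since $n-2r-1 \leq n-r-1$. The sub-case $a \leq 2r+2$ reduces after rearrangement to $r(a-r-1) \geq 0$, which holds because $a \geq r+2$. The argument presents no serious obstruction; the only care required is checking Jackson's hypotheses on the boundary and isolating the case $a = r+1$ (where $G$ may be as dense as $K_{r+1,n-r-1}$ and the bound is tight) so that the forbidden-cycle argument is only invoked when it applies.
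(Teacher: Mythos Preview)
Your proposal is correct and follows essentially the same approach as the paper's proof: both arguments bound $a\geq r+1$ via Theorem~\ref{reg of subgraph} and Lemma~\ref{reg of complete bipartite}, dispose of the case $a=r+1$ by the trivial bipartite edge count, and then invoke Jackson's theorem with $m=r+2$ to handle $a\geq r+2$, splitting into the same two sub-cases. The paper frames the argument as a proof by contradiction and treats $a=1$ as a separate preliminary case, whereas you argue directly via the contrapositive and absorb $a=1$ into the $a=r+1$ case; these are cosmetic differences only.
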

\begin{proof}
    Suppose towards a contradiction that $|E(G)|>(r+1)(n-r-1)$. Since $G$ is bipartite we can consider $G$ as a subgraph of $K_{a,b}$ 
    with $1\leq a \leq b$ and $a+b=n$. Observe that if $a=1$, then $G$ is a tree since $G$ is connected. Hence $r=0$ by Lemma \ref{reg of tree}, so $|E(G)|=n-1 =(r+1)(n-r-1)$ and we obtain a contradiction.
    
    So we may assume that $2 \leq a \leq b$. Since $G \subseteq K_{a,b}$ and $\reg(\K[K_{a,b}])=a-1$ by Lemma \ref{reg of complete bipartite} $(i)$, it follows by Theorem \ref{reg of subgraph} that $a\geq r+1$. If $a=r+1$, then $b=n-r-1$ and so $G$ is a subgraph of $K_{r+1,n-r-1}$ which has  $(r+1)(n-r-1)$ edges, contradicting the assumption that $|E(G)| >(r+1)(n-r-1)$. So we can assume $a \geq r+2$. Hence $2 \leq r+2 \leq a \leq b$, and so we can apply Theorem \ref{Jackson lemma}. We have two cases:

{\bf Case 1: $a \leq 2(r+2)-2$.} Since $0 \leq a-r-1$, we have $b \leq a+b-r-1=n-r-1$. Hence $br \leq rn-r^2-r$, so 
\[br + (n-r-1) \leq rn-r^2-r + (n-r-1) = (r+1)(n-r-1).\]
It follows that $a+(b-1)(r+1) =br+ (n-r-1) \leq (r+1)(n-r-1)<|E(G)|$, so we conclude by Theorem \ref{Jackson lemma} that $G$ contains a cycle of length at least $2(r+2)$.

{\bf Case 2: $a \geq 2(r+2)-2$.} Then $(a+b-2(r+2)+3)(r+1)=(n-2r-1)(r+1) \leq (n-r-1)(r+1) < |E(G)|$, so again we conclude by Theorem \ref{Jackson lemma} that $G$ contains a cycle of length at least $2(r+2)$.

In either case, $G$ contains an even cycle
$C$ of length at least $2(r+2)$.
But then $\reg(\K[G]) \geq \reg(\K[C]) \geq r+1$ by Theorem \ref{reg of subgraph} and Lemma \ref{reg of cycle} $(ii)$, contradicting the fact that $\reg(\K[G])=r$. This final contradiction concludes the proof.
\end{proof}

We now arrive at the main result of this section.

\begin{theorem} \label{reg vs pdim}
Let $n\geq2$ be an integer. Then 
\[\textnormal{CBPT}^\textnormal{pdim}_\textnormal{reg} (n) = \Big \{(r, p) \in {\mathbb{Z}}^2 \mid 0 < r < \left \lfloor{\frac{n}{2}}\right \rfloor,\; 1 \leq p \leq r(n-2-r) \Big \} \cup \{(0,0)\}.\]
\end{theorem}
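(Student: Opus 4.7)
My plan is to prove the equality of the two sets by showing each is contained in the other, with the forward direction (achievability) reduced to the two constructions already established and the reverse direction (necessity) reduced to the edge-bound Lemma \ref{edge lemma} together with the structural results of Corollary \ref{p=q-n+1} and Lemma \ref{reg of tree}.

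\textbf{Achievability ($\supseteq$).} The pair $(0,0)$ is realized by any tree on $n$ vertices (for instance, the path $P_n$), since Lemma \ref{reg of tree} gives $\reg(\K[P_n]) = \pdim(\K[P_n]) = 0$. For the remaining pairs, note that the condition $0 < r < \lfloor n/2 \rfloor$ forces $n \geq 4$, so Lemmas \ref{lower existence} and \ref{upper existence} apply. Given any integer $p$ with $1 \leq p \leq r(n-2-r)$, either $p \leq r^2$, in which case the graph $G_{n,r,p}$ of Lemma \ref{lower existence} realizes $(r,p)$, or $p \geq r^2$, in which case the graph $H_{n,r,p}$ of Lemma \ref{upper existence} does. (Both constructions cover the overlap case $p=r^2$.) Thus every pair in the prescribed set is achieved by some $G \in \textnormal{CBPT}(n)$. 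When $n \in \{2,3\}$ the condition $0 < r < \lfloor n/2 \rfloor$ is vacuous, so only $(0,0)$ must be realized, which is done by the path.

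\textbf{Necessity ($\subseteq$).} Let $G \in \textnormal{CBPT}(n)$ with $q = |E(G)|$, $r := \reg(\K[G])$, and $p := \pdim(\K[G])$. By Corollary \ref{p=q-n+1}$(ii)$ we have $0 \leq r < \lfloor n/2 \rfloor$, and by Corollary \ref{p=q-n+1}$(iii)$ we have $p = q - n + 1$. If $r = 0$, then Lemma \ref{reg of tree} forces $G$ to be a tree, hence $q = n-1$ and $p = 0$, giving the pair $(0,0)$. If $r \geq 1$, then $G$ is not a tree, so $q \geq n$ and hence $p \geq 1$. The upper bound on $p$ is the crux: Lemma \ref{edge lemma} gives $q \leq (r+1)(n-r-1)$, and a direct expansion
\[
(r+1)(n-r-1) - (n-1) = rn - r^2 - 2r = r(n-2-r)
\]
yields $p = q - n + 1 \leq r(n-2-r)$, placing $(r,p)$ in the claimed set.

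\textbf{Main obstacle.} The two nontrivial inputs are the edge-bound Lemma \ref{edge lemma} (for the upper bound $p \leq r(n-2-r)$) and the existence Lemmas \ref{lower existence} and \ref{upper existence} (whose constructions are tailored so that the number of edges in the ranges $[n, n+r^2-1]$ and $[n+r^2-1, n+r(n-2-r)-1]$ can each be realized while keeping the regularity exactly $r$). With those three results in hand, the present theorem reduces, as above, to straightforward algebraic manipulation and a case split on whether $r = 0$ or $r \geq 1$.
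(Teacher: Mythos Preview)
Your proof is correct and follows essentially the same route as the paper's own argument: both directions are handled by the same case split on $r=0$ versus $r\geq 1$, invoking Lemma~\ref{reg of tree} for the tree case, Lemmas~\ref{lower existence} and~\ref{upper existence} for achievability, and Lemma~\ref{edge lemma} combined with Corollary~\ref{p=q-n+1} for the upper bound on $p$. Your presentation is in fact slightly more explicit than the paper's in noting that $0<r<\lfloor n/2\rfloor$ forces $n\geq 4$ and in spelling out the split $p\leq r^2$ versus $p\geq r^2$.
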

\begin{proof}
We show both inclusions, starting with $\supseteq$.
If $n \in \{2,3\}$, then the RHS set is $\{(0,0)\}$, and by Lemma ~\ref{reg of tree}, we know that there is a tree $G$ on $n$ vertices with $\reg(\K[G])=\pdim(\K[G])=0$. So suppose $n \geq 4$ and take an element $(r,p)$ of the RHS set. If $(r,p)=(0,0)$, then again by Lemma ~\ref{reg of tree}, there is a tree $G$ on $n$ vertices with $\reg(\K[G])=\pdim(\K[G])=0$. Otherwise, if $(r,p)\neq(0,0)$, we must have  $0< r < \left \lfloor{\frac{n}{2}}\right \rfloor$, and $1 \leq p \leq r(n-2-r)$. Lemma ~\ref{lower existence} and Lemma
~\ref{upper existence}  imply that there is a connected bipartite graph $G$ on $n$ vertices with $\reg(\K[G])=r$ and $\pdim(\K[G])=p$, which verifies the first inclusion. 

Now, let $n\geq 2$ and $G\in \textnormal{CBPT}(n)$. By Corollary~\ref{reg upper/lower bound} $(ii)$, we have $0 \leq \reg(\K[G])< \left \lfloor{\frac{n}{2}}\right \rfloor$. If $\reg(\K[G])=0$, then $\pdim(\K[G])=0$ by Lemma \ref{reg of tree}. Hence letting $r=\reg(\K[G])$ and $p=\pdim(\K[G])$, we just need to show that if $0< r < \left \lfloor{\frac{n}{2}}\right \rfloor$, then $1 \leq \pdim(\K[G]) \leq r(n-2-r)$. So suppose $0 < r <\left \lfloor{\frac{n}{2}}\right \rfloor$. Since $r \neq 0$, we must have $1 \leq p$ by Lemma \ref{reg of tree}.

It remains to show that $p \leq r(n-2-r)$. By Lemma~\ref{edge lemma}, we have that $q \leq (r+1)(n-r-1)$, where $q$ is the number of edges of $G$. Hence Corollary \ref{p=q-n+1} $(iii)$ gives
\begin{align*}
p=q-n+1 
&\leq (r+1)(n-r-1)-n+1 \\
&=rn-r^2-2r \\
&=r(n-r-2)
\end{align*}
as desired, which concludes the proof.
\end{proof}
As an example of Theorem \ref{reg vs pdim},     Figure \ref{fig:reg vs pdim} shows the sets $\textnormal{CBPT}^\textnormal{pdim}_\textnormal{reg} (n)$ for $n=8,9$.
\begin{figure}
 \centering
\begin{tikzpicture}[thick,scale=0.85, every node/.style={scale=0.8}]
\draw[thin,->] (-0.5,0) -- (2.5,0) node[right] {$r = {\rm reg}$};
\draw[thin,->] (0,-0.5) -- (0,7.0) node[above] {$p = {\rm pdim}$};

\foreach \x [count=\xi starting from 0] in {1,2,3,4,5,6,7,8,9,10,11,12,13}{
        \draw (1pt,\x/2) -- (-1pt,\x/2);
    \ifodd\xi
        \node[anchor=east] at (0,\x/2) {$\x$};
    \fi
 \draw (1/2,1pt) -- (1/2,-1pt);
 \draw (2/2,1pt) -- (2/2,-1pt);
 \draw (3/2,1pt) -- (3/2,-1pt);
     \node[anchor=north] at (1/2,0) {$1$};
     \node[anchor=north] at (2/2,0) {$2$};
     \node[anchor=north] at (3/2,0) {$3$};
}

\foreach \point in {(0,0),(1/2,1/2),(1/2,2/2),(1/2,3/2),(1/2,4/2),(1/2,5/2),
(2/2,1/2),(2/2,2/2),(2/2,3/2),(2/2,4/2),(2/2,5/2),(2/2,6/2),(2/2,7/2),(2/2,8/2),
(3/2,1/2),(3/2,2/2),(3/2,3/2),(3/2,4/2),(3/2,5/2),(3/2,6/2),(3/2,7/2),(3/2,8/2),(3/2,9/2)}{
    \fill \point circle (2pt);
}

\draw[thin,->] (5.5,0) -- (8.5,0) node[right] {$r = {\rm reg}$};
\draw[thin,->] (6,-0.5) -- (6,7.0) node[above] {$p ={\rm pdim}$};

\foreach \x [count=\xi starting from 0] in {1,2,3,4,5,6,7,8,9,10,11,12,13}{
\draw ([xshift=1pt] 6,\x/2) -- ([xshift=-1pt] 6,\x/2);
    \ifodd\xi
        \node[anchor=east] at (5.9,\x/2) {$\x$};
    \fi
\draw (6+1/2,1pt) -- (6+1/2,-1pt);
 \draw (6+2/2,1pt) -- (6+2/2,-1pt);
 \draw (6+3/2,1pt) -- (6+3/2,-1pt);
     \node[anchor=north] at (6+1/2,0) {$1$};
     \node[anchor=north] at (6+2/2,0) {$2$};
     \node[anchor=north] at (6+3/2,0) {$3$};
       }

\
\foreach \point in {(6,0),(6+1/2,1/2),(6+1/2,2/2),(6+1/2,3/2),(6+1/2,4/2),(6+1/2,5/2),(6+1/2,6/2),
(6+2/2,1/2),(6+2/2,2/2),(6+2/2,3/2),(6+2/2,4/2),(6+2/2,5/2),(6+2/2,6/2),(6+2/2,7/2),(6+2/2,8/2),(6+2/2,9/2),(6+2/2,10/2),
(6+3/2,1/2),(6+3/2,2/2),(6+3/2,3/2),(6+3/2,4/2),(6+3/2,5/2),(6+3/2,6/2),(6+3/2,7/2),(6+3/2,8/2),(6+3/2,9/2),(6+3/2,10/2),(6+3/2,11/2),(6+3/2,12/2)}{
    \fill \point circle (2pt);
}
\node at (3,4) {$n=8$};
\node at (9,4) {$n=9$};
\end{tikzpicture}
\caption{Possible $(r,p) = (\reg(\K[G]),\pdim (\K[G]))$
  for all connected bipartite graphs on 8 and 9 vertices}\label{fig:reg vs pdim}
\end{figure}
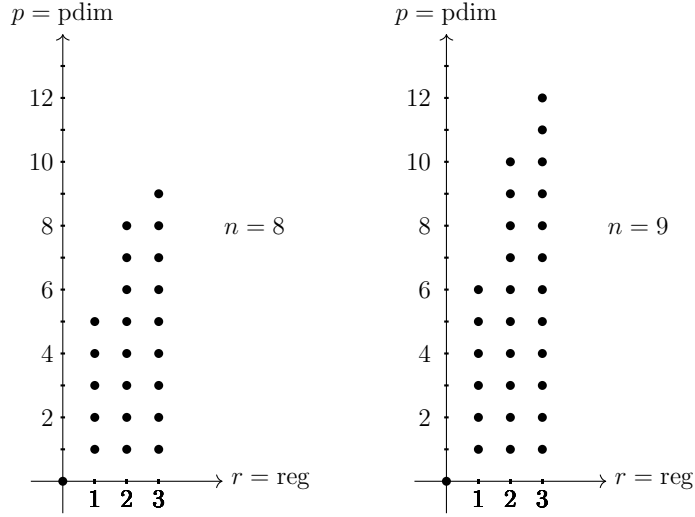
Now that we have a description of $\textnormal{CBPT}^\textnormal{pdim}_\textnormal{reg} (n)$ for each $n$, we can compute its cardinality.
\begin{corollary} \label{counting pdim}
For each $n\geq 2$, 
\begin{align*}
|\textnormal{CBPT}^\textnormal{pdim}_\textnormal{reg} (n)|&= 1-\frac{1}{6}\left \lfloor{\frac{n}{2}}\right \rfloor \Big(\left \lfloor{\frac{n}{2}}\right \rfloor-1 \Big) \Big(2 \left \lfloor{\frac{n}{2}}\right \rfloor-3n+5 \Big). 
\end{align*}
\end{corollary}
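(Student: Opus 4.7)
The plan is to apply Theorem \ref{reg vs pdim} directly and partition the set $\textnormal{CBPT}^\textnormal{pdim}_\textnormal{reg}(n)$ according to the value of its first coordinate $r = \reg(\K[G])$. The element $(0,0)$ contributes a single point, and for each integer $r$ with $1 \leq r \leq \left\lfloor \frac{n}{2} \right\rfloor - 1$ the admissible values of $p$ form the interval $\{1, 2, \ldots, r(n-2-r)\}$ and contribute exactly $r(n-2-r)$ points. Summing over the disjoint $r$-slices gives
\[
|\textnormal{CBPT}^\textnormal{pdim}_\textnormal{reg}(n)| = 1 + \sum_{r=1}^{\left\lfloor n/2 \right\rfloor - 1} r(n-2-r).
\]

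Next, I would set $m = \left\lfloor \frac{n}{2} \right\rfloor - 1$ and apply the standard identities $\sum_{r=1}^m r = \frac{m(m+1)}{2}$ and $\sum_{r=1}^m r^2 = \frac{m(m+1)(2m+1)}{6}$ to obtain
\[
\sum_{r=1}^{m} r(n-2-r) = \frac{(n-2)\,m(m+1)}{2} - \frac{m(m+1)(2m+1)}{6} = \frac{m(m+1)(3n - 7 - 2m)}{6}.
\]

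Finally, substituting $m+1 = \left\lfloor \frac{n}{2} \right\rfloor$ and $2m = 2\left\lfloor \frac{n}{2} \right\rfloor - 2$ rewrites $3n - 7 - 2m$ as $-\left(2\left\lfloor \frac{n}{2} \right\rfloor - 3n + 5\right)$, and adding the $1$ contributed by $(0,0)$ yields the asserted closed form. The edge cases $n \in \{2,3\}$ give $m = 0$, so the sum is empty and both sides reduce to $1$, consistent with the formula. Since the argument is a routine evaluation of power sums following Theorem \ref{reg vs pdim}, there is no real obstacle; the only care required is tracking the floor function through the final simplification.
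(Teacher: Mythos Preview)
Your proposal is correct and follows essentially the same approach as the paper: both apply Theorem~\ref{reg vs pdim}, isolate the contribution of $(0,0)$, sum $r(n-2-r)$ over $1 \leq r \leq \lfloor n/2\rfloor - 1$ using the standard formulas for $\sum r$ and $\sum r^2$, and then simplify. The only cosmetic difference is that the paper verifies $n\in\{2,3\}$ separately at the outset rather than as an edge case of the empty sum.
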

\begin{proof}
 This can be checked directly for $n=2,3$. 
    Let $n \geq 4$.  By Theorem \ref{reg vs pdim}, we have 
\begin{align*}
|\textnormal{CBPT}^\textnormal{pdim}_\textnormal{reg} (n)| - 1 &= \sum_{r=1}^{\left \lfloor{\frac{n}{2}}\right \rfloor -1} r(n-2-r) = (n-2)\sum_{r=1}^{\left \lfloor{\frac{n}{2}}\right \rfloor -1} r - \sum_{r=1}^{\left \lfloor{\frac{n}{2}}\right \rfloor -1} r^2 \\
&=\frac{n-2}{2}\left \lfloor{\frac{n}{2}}\right \rfloor \Big(\left \lfloor{\frac{n}{2}}\right \rfloor-1 \Big) -\frac{1}{6}\left \lfloor{\frac{n}{2}}\right \rfloor \Big(\left \lfloor{\frac{n}{2}}\right \rfloor-1 \Big)\Big (2 \left \lfloor{\frac{n}{2}}\right \rfloor-1 \Big) \\
&=-\frac{1}{6}\left \lfloor{\frac{n}{2}}\right \rfloor \Big(\left \lfloor{\frac{n}{2}}\right \rfloor-1 \Big) \Big(2 \left \lfloor{\frac{n}{2}}\right \rfloor-3n+5 \Big).
\qedhere
\end{align*}
\end{proof}

Our second corollary
shows that all tuples $(r,p) \in \{(0,0)\} \cup \N^2$ can be realized as $(\reg(\K[G]), \pdim(\K[G]))$ for some connected bipartite graph $G$.
Here
$\mathbb{N} = \{1,2,3,\ldots,\}$.

\begin{corollary} \label{union}
    Let $r$ and $p$ be integers. Then there is a connected bipartite graph $G$ on at least two vertices with $\reg(\K[G])=r$ and $\pdim(\K[G])=p$ if and only if $r=p=0$ or $r,p \geq 1$. Equivalently, 
    \[ \bigcup_{n\geq 2}^\infty \textnormal{CBPT}^\textnormal{pdim}_\textnormal{reg} (n) = \{(0,0)\} \cup \N^2.\]
\end{corollary}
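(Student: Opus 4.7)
The plan is to derive both directions as immediate consequences of Theorem \ref{reg vs pdim}. For the forward implication, suppose $G$ is a connected bipartite graph on $n \geq 2$ vertices with $(\reg(\K[G]), \pdim(\K[G])) = (r,p)$. Then $(r,p) \in \textnormal{CBPT}^\textnormal{pdim}_\textnormal{reg}(n)$, so by Theorem \ref{reg vs pdim} either $(r,p) = (0,0)$ or $(r,p)$ lies in the region $0 < r < \lfloor n/2 \rfloor$ and $1 \leq p \leq r(n-2-r)$; in the latter case we automatically have $r \geq 1$ and $p \geq 1$, establishing one inclusion.

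For the reverse implication I would split into two cases. If $(r,p) = (0,0)$, take $G$ to be any tree on $n \geq 2$ vertices (e.g., $G = K_{1,n-1}$); this is a connected bipartite graph, and Lemma \ref{reg of tree} yields $\reg(\K[G]) = \pdim(\K[G]) = 0$. If instead $r, p \geq 1$, the task reduces to choosing an $n$ for which the pair $(r,p)$ satisfies the inequalities from Theorem \ref{reg vs pdim}. Concretely, I would pick
\[
n \;\geq\; \max\bigl\{\,2r+2,\; r + 2 + \lceil p/r \rceil\,\bigr\}.
\]
The first bound ensures $\lfloor n/2 \rfloor \geq r+1 > r$, so $0 < r < \lfloor n/2 \rfloor$; the second rearranges to $r(n-2-r) \geq p$, so $1 \leq p \leq r(n-2-r)$. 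Theorem \ref{reg vs pdim} then produces a connected bipartite graph $G$ on $n$ vertices with $(\reg(\K[G]), \pdim(\K[G])) = (r,p)$, as required.

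There is no real obstacle here: once Theorem \ref{reg vs pdim} is in hand, the argument is bookkeeping with the inequalities $n \geq 2r+2$ and $n \geq r + \lceil p/r \rceil + 2$, both of which are satisfied for all sufficiently large $n$. The only mild subtlety is verifying the floor condition $\lfloor n/2 \rfloor > r$ (which fails at $n = 2r+1$ but holds for all $n \geq 2r+2$), and this is handled by the first term of the maximum above.
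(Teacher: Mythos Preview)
Your proof is correct and follows essentially the same approach as the paper: both derive the corollary directly from Theorem \ref{reg vs pdim}, handling $(0,0)$ via trees and $(r,p)\in\N^2$ by exhibiting a sufficiently large $n$ for which the inequalities of that theorem hold. The only cosmetic difference is the explicit witness: the paper takes $N = 2 + r + \max\{r,p\}$, while you take any $n \geq \max\{2r+2,\; r+2+\lceil p/r\rceil\}$; both choices verify $0 < r < \lfloor n/2\rfloor$ and $p \leq r(n-2-r)$ in the same way.
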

\begin{proof}
    Let $G$ be a connected bipartite graph on
    $n \geq 2$ vertices with  $\reg(\K[G])=r$ and $\pdim(\K[G])=p$.   By Theorem \ref{reg vs pdim},
    $(r,p) = (0,0)$, or $r,p \geq 1$.

    Conversely, suppose $r=p=0$ or $r,p \geq 1$. If $r=p=0$, then the unique tree $G$ on two vertices
    is a connected bipartite graph with 
    $\reg(\K[G]) = \pdim(\K[G]) = 0$,
    so $(0,0) \in {\rm CBPT}_{\reg}^{\pdim}(2)$. So assume $r,p \geq 1$. Let 
    $N =2+r+ \max\{r,p\}$. Then $N \geq 2+r+r=2+2r$. Hence 
    \[
    0< r < r+1 = \left \lfloor{\frac{2r+2}{2}}\right \rfloor \leq \left \lfloor{\frac{N}{2}}\right \rfloor
    \] 
    and thus $0 < r < \left \lfloor{\frac{N}{2}}\right \rfloor$. Also, observe that since $r \geq 1$,
    \[r(N-2-r)=r\max\{r,p\} \geq rp \geq p,\] so $1 \leq p \leq r(N-2-r)$. 
    We then have $(r,p) \in \textnormal{CBPT}^\textnormal{pdim}_\textnormal{reg} (N)$
    by Theorem \ref{reg vs pdim}.
\end{proof}

\section{Proof of main theorem}

Using the previous sections, we can prove the main result
of this paper, namely, a description of all the elements
of ${\rm CBPT}_{{\rm reg},\deg, {\rm pdim},{\rm depth},\dim}(n)$.
In particular, we now prove:

\begin{theorem}\label{resultmaintheorem}
Let $n \geq 2$.  Then ${\rm CBPT}_{{\rm reg},\deg, {\rm pdim},{\rm depth},\dim}(n)$ is given by
\[\left\{(r,r,p,n-1,n-1) ~\left|~ 
0 < r < \left\lfloor \frac{n}{2} \right\rfloor,~
1 \leq p \leq r(n-2-r) \right\}\right. \cup \{(0,0,0,n-1,n-1)\}.\]
\end{theorem}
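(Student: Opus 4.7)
The plan is to reduce the five-dimensional statement to the two-dimensional classification already obtained in Theorem \ref{reg vs pdim}, using the fact that three of the five invariants are completely determined once we know that $G$ is a connected bipartite graph on $n$ vertices.

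First, I would observe that for any connected bipartite graph $G$ on $n \geq 2$ vertices, $\K[G]$ is Cohen--Macaulay by Theorem \ref{matching} $(iii)$. Applying Corollary \ref{reg upper/lower bound} $(i)$ and $(ii)$, this immediately forces
\[
\dim(\K[G]) = \depth(\K[G]) = n-1, \qquad \deg(h_{\K[G]}(t)) = \reg(\K[G]).
\]
Consequently, every element of ${\rm CBPT}_{{\rm reg},\deg,{\rm pdim},{\rm depth},\dim}(n)$ has the form $(r, r, p, n-1, n-1)$ for some integers $r$ and $p$, and the only freedom that remains is in the pair $(r,p) = (\reg(\K[G]), \pdim(\K[G]))$. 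In other words, the map $(r,r,p,n-1,n-1) \mapsto (r,p)$ is a bijection between ${\rm CBPT}_{{\rm reg},\deg,{\rm pdim},{\rm depth},\dim}(n)$ and $\textnormal{CBPT}^\textnormal{pdim}_\textnormal{reg}(n)$.

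Next, I would invoke Theorem \ref{reg vs pdim}, which precisely characterizes $\textnormal{CBPT}^\textnormal{pdim}_\textnormal{reg}(n)$ as
\[
\bigl\{(r,p) : 0 < r < \lfloor n/2 \rfloor,\ 1 \leq p \leq r(n-2-r)\bigr\} \cup \{(0,0)\}.
\]
Translating each pair back into the 5-tuple $(r,r,p,n-1,n-1)$ via the bijection above yields exactly the set in the statement, with the tuple $(0,0,0,n-1,n-1)$ corresponding to the $(0,0)$ element (realized by any tree on $n$ vertices).

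There is essentially no obstacle here: all the heavy lifting, both the existence constructions (Lemmas \ref{lower existence} and \ref{upper existence}) and the numerical upper bound on $p$ (via the edge bound in Lemma \ref{edge lemma}), has already been absorbed into Theorem \ref{reg vs pdim}. The only thing to verify with care is that the Cohen--Macaulay consequences apply uniformly across the entire family, including the degenerate cases $n=2,3$ where the only connected bipartite graphs are trees and the 5-tuple collapses to $(0,0,0,n-1,n-1)$; this is handled cleanly since Corollary \ref{reg upper/lower bound} is stated for all $n \geq 2$.
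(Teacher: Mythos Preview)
Your proposal is correct and follows essentially the same approach as the paper: both reduce the five-tuple to the pair $(\reg,\pdim)$ via Corollary \ref{reg upper/lower bound} (Cohen--Macaulayness forces $\deg h = \reg$ and $\depth = \dim = n-1$), and then invoke Theorem \ref{reg vs pdim}. Your phrasing of this reduction as a bijection between the five-tuple set and $\textnormal{CBPT}^\textnormal{pdim}_\textnormal{reg}(n)$ is a slightly more compact packaging, but the logical content is identical.
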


\begin{proof}
    Fix an $n \geq 2$ and let $T$ denote the
    set in the statement.  We will first show that
    all the elements of ${\rm CBPT}_{{\rm reg},\deg, {\rm pdim},{\rm depth},\dim}(n)$ belong to $T$.

Let $G$ be any connected bipartite graph on $n$ vertices,
and set 
\[(r,d_1,p,d_2,d_3) = (\reg(\K[G],\deg(h_{\K[G]}(t)),\pdim(\K[G]),
\depth(\K[G]),\dim(\K[G])).\]
By Corollary \ref{p=q-n+1} $(i)$ and $(ii)$, we 
have $0 \leq r=d_1 < \left\lfloor \frac{n}{2} \right\rfloor$ and $d_2=d_3=n-1$, i.e., $(r,d_1,p,d_2,d_3) = (r,r,p,n-1,n-1)$.
Since $G \in {\rm CBPT}(n)$, by 
Theorem \ref{reg vs pdim}
we have $r = p =0$, or $0 < r < \left\lfloor \frac{n}{2} \right\rfloor$
and $1 \leq p \leq r(n-2-r)$.  Consequently, $(r,d_1,p,d_2,d_3)
\in T$.

For the reverse containment, note that 
$(0,0,0,n-1,n-1) \in {\rm CBPT}_{{\rm reg},\deg, {\rm pdim},{\rm depth},\dim}(n)$ since any connected tree $G$ on $n$ vertices
satisfies 
\[(\reg(\K[G],\deg(h_{\K[G]}(t)),\pdim(\K[G]),
\depth(\K[G]),\dim(\K[G])) = (0,0,0,n-1,n-1)\]
by Corollary \ref{p=q-n+1} and Lemma \ref{reg of tree}.
So, consider any $(r,r,p,n-1,n-1) \in T$ with $0 <r$.  
Because the tuple $(r,p)$ belongs to 
${\rm CPBT}_{\reg}^{\pdim}(n)$ by Theorem \ref{reg vs pdim},
there exists a connected bipartite graph $G$ on $n$ 
vertices with $\reg(\K[G]) = r$ and $\pdim(\K[G]) = p$.  But
by Corollary \ref{p=q-n+1} $(i)$ and $(ii)$, this graph
$G$ also has $\deg(h_{\K[G]}(t)) = r$ and $\dim(\K[G]) =
\depth(\K[G]) = n-1$.  Thus $(r,r,p,n-1,n-1) \in
{\rm CBPT}_{{\rm reg},\deg, {\rm pdim},{\rm depth},\dim}(n)$,
as desired.
\end{proof}

\begin{remark}
    Theorem \ref{resultmaintheorem} focuses on the {\it connected} 
    bipartite graphs.  It is possible to provide a generalization
    of Theorem \ref{resultmaintheorem} to describe 
    all the possible values for 
    these invariants
    for all bipartite graphs,
    not just connected bipartite graphs.  In particular,
    one needs to make use of the fact that these invariants
    behave well over tensor products.
    However, additional
    care needs to be taken for bipartite graphs with
    isolated vertices.  See \cite{B} for the worked out details.
\end{remark}

\begin{remark}\label{codegree}
    In Theorem \ref{reg vs pdim}, we 
    completely determined the set
    ${\rm CBPT}_{\reg}^{\pdim}(n)$.   Akihiro Higashitani pointed out to us that describing
    this set is equivalent to describing a combinatorially defined set; we quickly sketch out these details.
    Associated with a toric ideal of a graph $G$ is a polytope $P_G$.  The {\it codegree} of $P_G$
    is given by ${\rm codeg}(P_G) = \min\{ k \in \mathbb{Z} ~|~ {\rm int}(kP_G) \cap \mathbb{Z}^n \neq 
    \emptyset\}$.  This invariant is measuring
    the smallest integer $k$ such that the interior of the polytope $kP_G$ has an integer lattice point.
    In the case $G$ is a bipartite graph, it can
    be shown that 
    ${\rm deg}(h_{\K[G]}(t)) + {\rm codeg}(P_G) = n$.
    Further, when $G$ is a bipartite graph, by
    Corollary \ref{p=q-n+1} $(ii)$ and $(iii)$
    we have ${\rm codeg}(P_G) = n- {\rm reg}(\K[G])$
    and $|E| = {\rm pdim}(\K[G])+n-1$.  
    Consequently,
    determining the elements of  ${\rm CBPT}_{\reg}^{\pdim}(n)$ is equivalent
    to determining the elements of 
    $${\rm CBPT}_{{\rm codeg}}^{|E|}(n)
    = \{({\rm codeg}(P_G),|E|) ~|~ 
    G \in {\rm CBPT}(n)\}.$$
    This observation suggests it might be interesting
    to consider all pairs $({\rm codeg}(P_G),|E|)$
    for all graphs, not just bipartite graphs.
  
\end{remark}


\bibliographystyle{plain}
\bibliography{main}
\end{document}